\theoremstyle{plain} \numberwithin{equation}{section}
\newtheorem{theorem}{Theorem}[section]
\newtheorem{corollary}[theorem]{Corollary}
\newtheorem{proposition}[theorem]{Proposition}
\newtheorem{lemma}[theorem]{Lemma}
\theoremstyle{definition}
\newtheorem*{definition}{Definition}
\newtheorem{example}{Example}[section]
\newtheorem*{remark}{Remark}
\def\Z{\mathbb Z}
\def\C{\mathbb C}
\def\R{\mathbb R}
\def\dia{\diamondsuit}
\DeclareMathOperator{\Hom}{Hom}
\DeclareMathOperator{\Cone}{Cone}
\DeclareMathOperator{\im}{im}
\newcommand{\cP}{\mathcal{P}}
\newcommand{\cF}{\mathcal{F}}
\newcommand{\nbd}{\mathrm{nbd }}
\def\red#1{\textcolor{red}{#1}}
\def\blue#1{\textcolor{blue}{#1}}
\begin{document}
\title{Toric origami manifolds and multi-fans}
\author[M. Masuda]{Mikiya Masuda}
\address{Department of Mathematics, Osaka City University, Sumiyoshi-ku, Osaka 558-8585, Japan.}
\email{masuda@sci.osaka-cu.ac.jp}

\author[S. Park]{Seonjeong Park}
\address{Division of Mathematical Models, National Institute for Mathematical Sciences, 463-1 Jeonmin-dong, Yuseong-gu, Daejeon 305-811, Korea}
\email{seonjeong1124@nims.re.kr}

\dedicatory{Dedicated to Professor Victor M. Buchstaber  on his 70th birthday}

\date{\today}
\thanks{The first author was partially supported by Grant-in-Aid for Scientific Research 25400095}
\subjclass[2000]{Primary 57S15, 53D20; Secondary 14M25}
\keywords{toric origami manifold, origami template, Delzant polytope, multi-fan, symplectic toric manifold, moment map, torus action}

\begin{abstract}
   The notion of a toric origami manifold, which weakens the notion of a symplectic toric manifold, was introduced by Cannas da Silva-Guillemin-Pires \cite{ca-gu-pi11} and they show that toric origami manifolds bijectively correspond to origami templates via moment maps, where an origami template is a collection of Delzant polytopes with some folding data.  Like a fan is associated to a Delzant polytope, a multi-fan introduced in \cite{ha-ma03} and \cite{masu99} can be associated to an oriented origami template.  In this paper, we discuss their relationship and show that any simply connected compact smooth $4$-manifold with a smooth action of $T^2$ can be a toric origami manifold. We also characterize products of even dimensional spheres which can be toric origami manifolds.
\end{abstract}

\maketitle

\section*{Introduction}

A \emph{symplectic toric} (or \emph{toric symplectic}) manifold is a compact connected symplectic manifold with an effective Hamiltonian action of a torus $T$ with $\dim T=\frac{1}{2}\dim M$.   Any projective smooth toric variety with the restricted compact torus action is a symplectic toric manifold.  A famous theorem by Delzant \cite{delz88} says that the correspondence from symplectic toric manifolds to simple convex polytopes called \emph{Delzant polytopes} (see \cite{guil94}) via moment maps is bijective. The normal fan of a Delzant polytope is complete and non-singular. Therefore, the theorem of Delzant implies that equivariant diffeomorphism types of symplectic toric manifolds are exactly same as those of projective smooth toric varieties.

A \emph{folded symplectic form} on a $2n$-dimensional manifold $M$ is a closed $2$-form $\omega$ whose top power $\omega^n$ vanishes transversally on a subset $Z$ and whose restriction to points in $Z$ has maximal rank. Then $Z$ is a codimension-one submanifold of $M$ and called the \emph{fold}.  Cannas da Silva \cite{cann10} shows using the h-principle that any orientable compact smooth manifold $M$ of even dimension admits a folded symplectic form if and only if $M$ admits a stably almost complex structure.  As a corollary, it follows that any orientable compact smooth $4$-manifold admits a folded symplectic form.

The maximality of the restriction of $\omega$ to $Z$ implies the existence of a line field on $Z$ and $\omega$ is called an \emph{origami form} if the line field is the vertical bundle of some principal $S^1$-fibration $Z\to B$.  The notions of a Hamiltonian action and a moment map can be defined  similarly to the symplectic case and a \emph{toric origami manifold} is defined to be a compact connected origami manifold $(M,\omega)$ equipped with an effective Hamiltonian action of a torus $T$ with $\dim T=\frac{1}{2}\dim M$.

Cannas da Silva, Guillemin and Pires \cite{ca-gu-pi11} show that toric origami manifolds bijectively correspond to origami templates via moment maps, where an origami template is a collection of Delzant polytopes with some folding data.  This result is a generalization of the theorem of Delzant to toric origami manifolds.  A toric origami manifold is not necessarily orientable and oriented toric origami manifolds correspond to \emph{oriented} origami templates.  Like a fan is associated to a Delzant polytope, a multi-fan introduced in \cite{ha-ma03} and \cite{masu99} can be associated to an oriented origami template.  A \emph{multi-fan} is a collection of cones satisfying certain conditions, where cones may overlap unlike an ordinary fan.

In this paper, we discuss how to associate a multi-fan to an oriented origami template and their relationship. Then we show that any simply connected closed smooth $4$-manifold with a smooth action of $T^2$ can be a toric origami manifold.
The product of a toric origami manifold and a symplectic toric manifold is a toric origami manifold with the product form.  However, the product of two origami forms with nonempty folds is not an origami form.  Therefore, it is natural to ask whether the product of two toric origami manifolds admits a toric origami form.  We discuss this problem and characterize products of even dimensional spheres which admit toric origami forms.

This paper is organized as follows. We recall the definitions and properties of multi-fans and torus manifolds in Section~\ref{sec:2} and those of toric origami manifolds and origami templates in Section~\ref{sec:3}.  In Section~\ref{sec:4}, we discuss how to associate a multi-fan to an origami template. In Section~\ref{sec:4-dimensional case}, we show that any simply connected compact smooth $4$-manifold with a smooth action of $T^2$ is equivariantly diffeomorphic to a toric origami manifold.  In Section~\ref{sec:6}, we discuss whether the product of toric origami manifolds admits a toric origami form and characterize products of even dimensional spheres which admit toric origami forms.

\section{Multi-fans and torus manifolds} \label{sec:2}

We recall the definitions and properties of multi-fans and torus manifolds, and then see their relationship.  Details can be found in \cite{ha-ma03}.

Let $N$ be a lattice of rank $n$, which is isomorphic to $\Z^n$. We denote the real vector space $N\otimes\R$ by $N_\R$. A cone $\sigma$ in $N$ means a strongly convex rational polyhedral cone (with apex at the origin), that is, there exists a finite number of vectors $v_1,\dots,v_m$ in $N$ such that
\[
\sigma=\{r_1v_1+\dots+r_mv_m \mid r_i\in\R \text{ and } r_i\ge 0\text{ for all $i$}\}
\quad \text{and}\quad  \sigma\cap(-\sigma)=\{0\}.
\]
A cone is called \emph{simplicial} if it is generated by linearly independent vectors.  If the generating vectors can be taken as a part of a basis of $N$, then the cone is called \emph{non-singular}.

Denote by $\Cone(N)$ the set of all cones in $N$. The set $\Cone(N)$ has a (strict) partial ordering $\prec$ defined by: $\tau\prec\sigma$ if and only if $\tau$ is a proper face of $\sigma$. The cone $\{0\}$ consisting of the origin is the unique minimum element in $\Cone(N)$. Let $\Sigma$ be a partially ordered finite set with a unique minimum element, e.g. $\Sigma$ is a finite simplicial complex with an empty set added, where the partial ordering is the inclusion relation. We denote the (strict) partial ordering on $\Sigma$ by $<$ and the minimum element by $\ast$. Suppose that there is a map $$C\colon \Sigma\to \Cone(N)$$ such that
\begin{enumerate}
\item $C(\ast)=\{0\}$;
\item If $I<J$ for $I,J\in\Sigma$, then $C(I)\prec C(J)$;
\item For any $J\in\Sigma$ the map $C$ restricted on $\{I\in\Sigma\mid I\leq J\}$ is an isomorphism of ordered sets onto $\{\kappa\in\Cone(N)\mid \kappa\preceq C(J)\}$.
\end{enumerate}
For an integer $m$ such that $0\leq m\leq n$, we set $$\Sigma^{(m)}:=\{I\in\Sigma\mid\dim C(I)=m\}.$$ Now, we consider two functions $$w^\pm\colon\Sigma^{(n)}\to\Z_{\geq0}$$ such that $w^+(I)>0$ or $w^-(I)>0$ for every $I\in\Sigma^{(n)}$.
\begin{definition}
A triple $\Delta:=(\Sigma,C,w^\pm)$ is a \emph{multi-fan} in $N$ and the dimension of $\Delta$ is defined to be the rank of $N$. The multi-fan $\Delta:=(\Sigma,C,w^\pm)$ is called \emph{simplicial} (resp. \emph{non-singular}) if every cone in $C(\Sigma)$ is simplicial (resp. non-singular).
\end{definition}
A vector $v\in N_\R$ is called \emph{generic} if $v$ does not lie on any linear subspace spanned by a cone in $C(\Sigma)$ of dimension less than $n$. For a generic vector $v$ we set $d_v=\sum_{v\in C(I)}(w^+(I)-w^-(I))$, where the sum is understood to be zero if there is no such $I$. We call a multi-fan $\Delta=(\Sigma,C,w^\pm)$ of dimension $n$ \emph{pre-complete} if $\Sigma^{(n)}\neq\emptyset$ and the integer $d_v$ is independent of the choice of generic vectors $v$. We call this integer the \emph{degree} of $\Delta$.

For each $K\in\Sigma$, by setting $\Sigma_K:=\{J\in\Sigma\mid K\leq J\}$ and defining $N^{C(K)}$ to be the quotient lattice of $N$ by the sublattice generated by $C(K)\cap N$, we can naturally define $$C_K\colon \Sigma_K\to\Cone(N^{C(K)})$$ and $$w^\pm_K\colon \Sigma_K^{(n-|K|)}\subset\Sigma^{(n)}\to\Z_{\geq0}.$$ The triple $\Delta_K:=(\Sigma_K,C_K,w_K^\pm)$ is a multi-fan in $N^{C(K)}$, and it is called the \emph{projected multi-fan} with respect to $K\in\Sigma$.
\begin{definition}
A pre-complete multi-fan $\Delta=(\Sigma,C,w^\pm)$ is said to be \emph{complete} if the projected multi-fan $\Delta_K$ is pre-complete for any $K\in\Sigma$.
\end{definition}

For an ordinary fan $\Delta$ of dimension $n$, we define $w^+$ to be $1$ on $n$-dimensional cones and $w^-$ to be zero.  Then the ordinary fan $\Delta$ can be regarded as a multi-fan and it is complete in the ordinary sense (i.e. the union of cones in $\Delta$ is the entire space $N_\R$) if and only if it is complete in the sense of multi-fan.
We will see examples of non-singular complete multi-fans in later sections.

We say that $M$ is a \emph{torus manifold} if it is an orientable, compact, connected, smooth manifold of dimension $2n$ with an effective action of an $n$-dimensional torus $T$ with nonempty fixed point set $M^T$.  Since $\dim T=\frac{1}{2}\dim M$,  $M^T$ is isolated. A closed, connected, codimension-two submanifold of $M$ is said to be \emph{characteristic} if it is a connected component of the set fixed pointwise under some circle subgroup of $T$ and contains at least one $T$-fixed point.
Since $M$ is compact, there are only finitely many characteristic submanifolds. We denote them by $M_i$, $i=1,\ldots,d$.
We set $$\Sigma(M):=\{I\subset\{1,\ldots,d\}\mid(\bigcap_{i\in I}M_i)^T\neq\emptyset\}$$
and add an empty set to $\Sigma(M)$ as a member.  The set $\Sigma(M)$ is an abstract simplicial complex of dimension $n-1$.

An orientation on $M$ together with an orientation on each characteristic submanifold of $M$ is called an \emph{omniorientation} on $M$.  Suppose that $M$ is omnioriented.  Then the normal bundle $\nu_i$ to $M_i$, which is a real plane bundle, has an orientation induced from the omniorientation and we may think of $\nu_i$ as a complex line bundle.  Then we obtain a unique element $v_i$ of $\Hom(S^1,T)= H_2(BT;\Z)$ characterized by these two conditions:
\begin{itemize}
\item $v_i(S^1)$ fixes $M_i$ pointwise;
\item $v_i(g)_*(\xi)=g\xi$ for $g\in S^1$ and $\xi\in \nu_i$,
\end{itemize}
where $v_i(g)_*$ denotes the differential of the diffeomorphism $v_i(g)\colon M\to M$ and $g\xi$ denotes the complex multiplication of $\xi$ by $g\in S^1\subset \C$.
Note that if the orientation on $M_i$ is reversed, then $v_i$ turns into $-v_i$ and $v_j$ $(j\not=i$) remains fixed, moreover $(w^+(I), w^-(I))$ turns into $(w^-(I),w^+(I))$ if $I\ni i$ and remains fixed otherwise.
If the orientation on $M$ is reversed, then $v_i$ turns into $-v_i$ for all $i=1,\dots,d$ and $(w^+(I), w^-(I))$ turns into $(w^-(I),w^+(I))$ for all $I\in \Sigma(M)^{(n)}$.

Adopting $H_2(BT;\Z)$ as the lattice $N$, we define a map  $$C(M)\colon \Sigma(M)\to \Cone(N)$$ by sending $I\in\Sigma(M)$ to the cone in $H_2(BT;\Z)\otimes\R=H_2(BT;\R)$ spanned by $v_i$'s $(i\in I)$ (and the empty set to $\{0\}$).  When $I\in \Sigma(M)^{(n)}$, the intersection $\bigcap_{i\in I}M_i$ is isolated and fixed by the $T$-action on $M$.  At each fixed point $p\in \bigcap_{i\in I}M_i$, the tangent space of $M$ at $p$ has two orientations: one is endowed by the orientation of $M$ and the other comes from the intersection of the oriented submanifolds $M_{i}$'s $(i\in I)$.  Denoting the ratio of the above two orientations by $\epsilon_p$, we define  $w(M)^+(I)$ to be the number of points $p\in \bigcap_{i\in I}M_i$ with $\epsilon_p=+1$ and similarly for $w(M)^-(I)$.

\begin{theorem}[\cite{ha-ma03}] \label{theo:2.0}
The multi-fan $\Delta(M):=(\Sigma(M),C(M),w(M)^\pm)$ of an omnioriented torus manifold $M$ is non-singular and complete.
\end{theorem}

The equivariant connected sum can be done for omnioriented torus manifolds, which we shall explain.  If $M$ is an omnioriented torus manifold, then the tangential representation $\tau_pM$ at $p\in M^T$ can be regarded as a complex $T$-representation in such a way that the orientation induced from the complex structure on each irreducible factor of $\tau_pM$, which is of real dimension two, is compatible with the orientations on $M$ and $M_i$.  Let $M$ and $M'$ be omnioriented torus manifolds of dimension $2n$.  Suppose that there are fixed points $p\in M$ and $p'\in M'$ where the tangential representations are isomorphic as complex representations but the ratios $\epsilon_{p}$ and $\epsilon_{p'}$ mentioned above have opposite signs.  Then one can perform the equivariant connected sum of $M$ and $M'$ at $p$ and $p'$, denoted $M\sharp_{p,p'}M'$, in a natural way.  The set of cones in the multi-fan $\Delta(M\sharp_{p,p'}M')$ is the disjoint union of cones in $\Delta(M)$ and $\Delta(M')$ except the cones $C$ and $C'$ corresponding to $p$ and $p'$ and the cones $C$ and $C'$ are identified in $\Delta(M\sharp_{p,p'}M')$.  The weights on cones in $\Delta(M\sharp_{p,p'}M')$ are inherited from $\Delta(M)$ and $\Delta(M')$ except the identified cone and the weight on the identified cone is the sum of the weights on $C$ and $C'$.  When the sum is zero (meaning that both $w^+$ and $w^-$ are zero on the cone), we think that the identified cone vanishes.

One can blow up an omnioriented torus manifold $M$ at any fixed point $p$.  There is an $n$-dimensional cone $C$ in $\Delta(M)$ which corresponds to the fixed point $p$.  Blowing up at $p$ corresponds to adding an edge whose primitive vector is the sum of the primitive vectors on the edges in $C$ and making a stellar subdivision of $C$. The weights on the subdivided $n$-dimensional cones are all $(1,0)$ or all $(0,1)$ according as the ratio $\epsilon_p$ is $+1$ or $-1$.  When the sum $w^++w^-$ of the weight $(w^+,w^-)$ is more than one, the cone $C$ remains but $(1,0)$ or $(0,1)$ is subtracted from the weight $(w^+,w^-)$ on $C$ according as $\epsilon_p$ is $+1$ or $-1$.

Compact smooth toric varieties (with restricted compact torus actions) and quasitoric manifolds introduced by Davis-Januszkiewicz \cite{da-ja91} are torus manifolds.  These two families are contained in the family of topological toric manifolds introduced by Ishida-Fukukawa-Masuda \cite{is-fu-ma13} and topological toric manifolds (with restricted compact torus actions) are also torus manifolds.  The cohomology ring of a topological toric manifold is generated by degree two elements.  Another typical example of a torus manifold is the unit sphere $S^{2n}$ of $\C^n\oplus\R$, where the $T$-action on $\C^n$ is standard and that on $\R$ is trivial.

The $T$-action on a torus manifold $M$ is said to be \emph{locally standard} if it is locally modeled by the standard $T$-action on $\C^n$, to be precise, if any point of $M$ has an open $T$-invariant neighborhood equivariantly diffeomorphic to an $T$-invariant open subset of a faithful $T$-representation space of real dimension $2n$.  It is known that the $T$-action on $M$ is locally standard if $H^{odd}(M)=0$ (\cite{ma-pa06}).   If the $T$-action on $M$ is locally standard, then the orbit space $M/T$ is a manifold with corners.  The $T$-actions on the torus manifolds mentioned in the paragraph above are all locally standard and their orbit spaces are contractible; even every face of the orbit spaces is contractible.  However, the $T$-action is not necessarily locally standard for a general torus manifold $M$ and even if it is locally standard, the orbit space $M/T$ is not necessarily acyclic.  Here is a relationship between the topology of $M$ and $M/T$.

\begin{theorem}[\cite{ma-pa06}] \label{theo:2.1}
Let $M$ be a torus manifold with locally standard $T$-action. Then the following hold.
\begin{enumerate}
\item $H^{odd}(M)=0$ if and only if $M/T$ is \emph{face-acyclic}, i.e., every face of $M/T$ (even $M/T$ itself) is acyclic.
\item $H^*(M)$ is generated by degree two elements as a ring if and only if  $M/T$ is a \emph{homology polytope}, i.e., $M/T$ is face-acyclic and any intersection of faces of $M/T$ is connected unless empty.
\end{enumerate}
\end{theorem}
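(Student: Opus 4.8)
The plan is to reconstruct $M$ from its orbit space and then compute $H^*(M)$ by a spectral sequence organized by the face poset of $Q:=M/T$, reading off both equivalences from the behaviour of this sequence. Since the action is locally standard, $Q$ is a manifold with corners and the orbit projection $\pi\colon M\to Q$ is a principal $T$-bundle over the interior $Q^\circ$ and, over a neighborhood of the relative interior of a codimension-$k$ face $F$, is modeled on the standard $T$-action on $\C^k\times T^{n-k}$ twisted by the isotropy subtorus $\lambda(F)\subset T$. Consequently $M_F:=\pi^{-1}(F)$ is again a locally standard torus manifold for the torus $T/\lambda(F)$ with orbit space $F$, and $M$ is assembled by gluing the tubes $\pi^{-1}(\mathrm{nbd}\,F)$ along the face poset. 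I would first record this structure theorem, reducing the assertions to a computation of $H^*(M)$ purely in terms of the faces of $Q$ and the tori $T/\lambda(F)$.

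Next I would filter $M$ by orbit type, equivalently cover it by the invariant open sets $\pi^{-1}(\mathrm{star}\,F)$, and use the resulting Mayer--Vietoris/Leray-type spectral sequence, which takes the form
\[
E_1^{p,q}=\bigoplus_{\codim F=p} H^q\bigl(\pi^{-1}(\mathrm{nbd}\,F)\bigr)\ \Longrightarrow\ H^{p+q}(M),
\]
where each tube retracts onto $M_F$, and $H^*(M_F)$ carries, via the free torus directions of $T/\lambda(F)$, an exterior-algebra factor over $H^*(F)$; the differentials are the alternating restriction maps along inclusions of faces. The key observation is that the only sources of cohomology are (i) the cohomology of the faces $F$ themselves and (ii) the odd exterior generators coming from the torus fibres.

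For part (1), if every face $F$ (including $Q$) is acyclic, then $H^q(F)=0$ for $q>0$, so the only classes entering $E_1$ are the torus exterior generators; these are exactly the classes that the differentials pair up and cancel, just as in the model computation $H^{\mathrm{odd}}(S^{2n})=0$ for the linear action on $\C^n\oplus\R$, and running the sequence gives $H^{\mathrm{odd}}(M)=0$. Conversely, if some face is not acyclic, I would take one of lowest codimension and show that a nontrivial reduced class of it, or an unpaired torus generator forced by it, survives to $E_\infty$ in odd total degree. Proving this converse sharply---that the first non-acyclic face genuinely produces a surviving odd class rather than being killed by a higher differential---is the main obstacle; I expect to handle it by induction on the codimension of faces, using the retraction of each tube onto $M_F$ together with the naturality of the spectral sequence under passage to $M_F$.

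For part (2), assume $Q$ is face-acyclic, so by part (1) the sequence degenerates and $H^*(M)$ is concentrated in even degrees. Its degree-two part is spanned by the Poincar\'e duals $\tau_i$ of the characteristic submanifolds $M_i=\pi^{-1}(F_i)$, one per facet $F_i$, and a monomial $\tau_{i_1}\cdots\tau_{i_r}$ is dual to $M_{i_1}\cap\cdots\cap M_{i_r}=\pi^{-1}(F_{i_1}\cap\cdots\cap F_{i_r})$, which is detected in $H^*(M)$ precisely when that face intersection is nonempty and connected. Hence $H^*(M)$ is generated by the $\tau_i$ exactly when every nonempty intersection of faces is connected, i.e. when $Q$ is a homology polytope; a disconnected intersection produces an even class not hit by any product of degree-two classes. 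The extra subtlety beyond part (1) is multiplicative bookkeeping: one must verify that the ring structure on $E_\infty$ is the expected face-ring quotient and that no spurious generators or relations appear, which again follows by induction over the face poset.
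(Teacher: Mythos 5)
The paper does not prove Theorem~\ref{theo:2.1}; it quotes it from \cite{ma-pa06}, where the argument occupies most of that paper and runs essentially through equivariant cohomology. Your sketch is in the right general territory (filtering $M$ by orbit type and reading off $H^*(M)$ from the faces of $Q$ and the free torus directions), but it has gaps at exactly the load-bearing points. The spectral sequence you write down is not correctly set up: a Mayer--Vietoris spectral sequence for the cover by the sets $\pi^{-1}(\st F)$ has $E_1$ indexed by multi-intersections of cover elements, while the filtration-by-orbit-type spectral sequence has $E_1^{p,q}=H^{p+q}(M_p,M_{p-1})$; neither has the page you state, and the claim that the differentials ``pair up and cancel'' all odd exterior classes when $Q$ is face-acyclic is precisely the content of that direction of the theorem, not an observation one can import from the $S^{2n}$ example.

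More seriously, the converse of (1) --- which you yourself flag as the main obstacle --- is the hard direction, and ``take the first non-acyclic face and show an odd class survives by induction and naturality'' is a plan rather than a proof: you must rule out that the offending class is killed by a higher differential or lands in even total degree. In \cite{ma-pa06} this direction is driven by a different mechanism: $H^{odd}(M)=0$ forces $H^*_T(M)$ to be free over $H^*(BT)$, from which the restrictions $H^*_T(M)\to H^*_T(M_i)$ are surjective and each characteristic submanifold again satisfies $H^{odd}(M_i)=0$, enabling an induction on dimension, with the acyclicity of $Q$ itself extracted by a separate argument; your outline has no substitute for this step. Likewise in (2), the assertion that $H^*(M)$ is generated by the $\tau_i$ if and only if all face intersections are connected is stated rather than derived: one needs that $H^*(M)$ is additively spanned by the duals of the face submanifolds $\pi^{-1}(F)$ (which uses part (1) and the identification of $H^*_T(M)$ with the face ring of $Q$), and that a disconnected intersection $F_{i_1}\cap\dots\cap F_{i_r}$ contributes several classes whose \emph{sum}, but not each summand, equals $\tau_{i_1}\cdots\tau_{i_r}$ --- that is the precise reason generation fails. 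As written, the proposal restates the theorem at the points where a proof is required.
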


As for fundamental groups, we have the following.
\begin{lemma} \label{lemm:2.1}
Let $M$ be a torus manifold with locally standard $T$-action. Then the quotient map $q\colon M\to M/T$ induces an isomorphism $q_*\colon \pi_1(M)\to \pi_1(M/T)$ on their fundamental groups.
\end{lemma}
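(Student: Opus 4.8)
The plan is to factor the comparison of fundamental groups through the free locus of the action. Let $M_0\subset M$ be the open subset on which $T$ acts freely; its complement is the union $\bigcup_{i=1}^{d} M_i$ of the characteristic submanifolds, each closed of codimension two. Since $T$ acts freely on $M_0$, the restriction $q_0\colon M_0\to B_0:=q(M_0)$ is a principal $T$-bundle, and $B_0$ is precisely the interior of the manifold with corners $M/T$. I would first record three facts. (i) The inclusion $j\colon B_0\hookrightarrow M/T$ is a homotopy equivalence, since a manifold with corners collars onto its interior. (ii) The inclusion $\iota\colon M_0\hookrightarrow M$ induces a surjection $\iota_*$ on $\pi_1$, because $M_0$ is the complement of a finite union of submanifolds of codimension $\ge 2$, so loops and null-homotopies may be put in general position off of it. (iii) Since the fibre $T$ is connected, the bundle $q_0$ yields an exact sequence $\pi_1(T)\xrightarrow{i_*}\pi_1(M_0)\xrightarrow{(q_0)_*}\pi_1(B_0)\to 1$, so $(q_0)_*$ is surjective with kernel $\im i_*$. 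These assemble into a commutative square with horizontal maps $(q_0)_*$ and $q_*$ and vertical maps $\iota_*$ and $j_*$.

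Surjectivity of $q_*$ is then immediate, as $q_*\circ\iota_*=j_*\circ(q_0)_*$ is a composite of surjections (alternatively, lift a loop in $M/T$ to a path in $M$ and close it up inside a connected orbit). For injectivity, take $[\alpha]\in\ker q_*$; using (ii) write $[\alpha]=\iota_*[\beta]$, so that $j_*(q_0)_*[\beta]=q_*[\alpha]=1$, and since $j_*$ is injective by (i) we get $[\beta]\in\ker(q_0)_*=\im i_*$. Thus injectivity reduces to the single claim $\im i_*\subseteq\ker\iota_*$, i.e. that every class coming from the fibre becomes trivial in $\pi_1(M)$.

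This claim is the heart of the matter. The group $\im i_*$ is generated by the classes of the orbit loops $g\mapsto v_i(g)\cdot x_0$, where $v_i\in\Hom(S^1,T)=\pi_1(T)$ is the isotropy circle fixing $M_i$: by local standardness the vectors $v_i$ attached to the $n$ characteristic submanifolds through a fixed point form a $\Z$-basis of $\pi_1(T)$ (equivalently, the non-singularity in Theorem~\ref{theo:2.0}), so the $v_i$ generate $\pi_1(T)$ and their images generate $\im i_*$. It then suffices to show each such orbit loop is null-homotopic in $M$. For this I would slide the basepoint: choosing a path $\eta$ from $x_0$ to a point $y\in M_i$, the map $(s,g)\mapsto v_i(g)\cdot\eta(s)$ is a free homotopy in $M$ from the orbit loop to the constant loop at $y$, the orbit collapsing because $v_i(S^1)$ fixes $M_i$. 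Hence the orbit loop is freely null-homotopic, so its class in $\pi_1(M)$ is trivial; concretely this is the statement that the meridian of $M_i$ bounds a normal disk in the local model $\C\times(\text{slice})$. This gives $\im i_*\subseteq\ker\iota_*$ and hence injectivity.

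The only genuine obstacle is this last claim, and precisely its two ingredients: that the isotropy circles generate all of $\pi_1(T)$ (this is exactly where local standardness is used) and that each resulting orbit loop dies in $M$. Everything else---the bundle exact sequence, the codimension-two surjectivity, the collar homotopy equivalence for the corners, and the diagram chase---is formal. I would take care with the basepoint bookkeeping in the free-homotopy step, noting that free null-homotopy of a based loop forces its class, not merely its conjugacy class, to be trivial, which is harmless since the trivial conjugacy class is a single element.
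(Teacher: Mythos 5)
Your proof is correct and follows essentially the same route as the paper's: restrict to the free locus, use that its complement has codimension at least two and that the inclusion of the interior of the orbit space is a homotopy equivalence, and then show the image of $\pi_1(T)$ dies in $\pi_1(M)$. The only (harmless) variation is in that last step: the paper contracts a whole free orbit to a nearby fixed point at once, whereas you kill the generators $v_i$ one at a time by sliding each orbit loop into the corresponding characteristic submanifold $M_i$, using local standardness at a fixed point to see that these $v_i$ form a basis of $\pi_1(T)$.
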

\begin{proof}
This is proved in \cite[Lemma 2.7]{wiem11}.  Since the proof is easy and used later, we shall give it.

Since the $T$-action on $M$ is locally standard, $M/T$ is a manifold with boundary $\partial(M/T)$.
Let $M^\circ$ be the union of all principal orbits in $M$. Then $M^\circ$ is a principal $T$-bundle over $M^\circ/T=M/T-\partial(M/T)$.
We consider the following commutative diagram
\begin{equation} \label{eq:cd}
\begin{CD}
\pi_1(T) @>\varphi_*>>\pi_1(M^\circ)@> q^\circ_*>> \pi_1(M^\circ/T)\\
@.  @V \iota_* VV                 @VV \bar{\iota}_*V\\
@.  \pi_1(M)@>q_*>>\pi_1(M/T)
\end{CD}
\end{equation}
where $\varphi\colon T\to Tx\subset M^\circ$ $(x\in M^\circ)$, $q^\circ$ is the restriction of $q$ to $M^\circ$ and $\iota$, $\bar{\iota}$ are both the inclusion maps.  We have $M^\circ = M \backslash \bigcup_{\{e\}\not=G\subset T}M^G$, where the union is taken over all non-trivial subtori $G$ of $T$. Because each $M^G$ has at least codimension-2 in $M$, it follows that
\begin{equation} \label{eq:cd1}
\text{$\iota_*$ in \eqref{eq:cd} is an epimorphism.}
\end{equation}
Clearly $\bar{\iota}\colon M^\circ/T\to M/T$ is a homotopy equivalence so that
\begin{equation} \label{eq:cd2}
\text{$\bar{\iota}_*$ in \eqref{eq:cd} is an isomorphism.}
\end{equation}
Since $q^\circ\colon M^\circ\to M^\circ/T$ is a principal $T$-bundle, $q^\circ_*$ in \eqref{eq:cd} is an epimorphism and $\ker q^\circ_*=\im \varphi_*$.  The free orbit $Tx$ shrinks to a fixed point in $M$ (one can see this easily if one takes the point $x$ to be close to a fixed point), so the composition $\iota_*\circ\varphi_*$ is trivial.
This observation together with \eqref{eq:cd1} and \eqref{eq:cd2} shows that $q_*$ in \eqref{eq:cd} is an isomorphism.
\end{proof}

\section{Toric origami manifolds}\label{sec:3}

In this section, we recall the definitions and properties of toric origami manifolds and origami templates.  Details can be found in \cite{ca-gu-pi11}.

A \emph{folded symplectic form} on a $2n$-dimensional manifold $M$ is a closed $2$-form $\omega$ whose top power $\omega^n$ vanishes transversally on a subset $Z$ and whose restriction to points in $Z$ has maximal rank. Then $Z$ is a codimension-one submanifold of $M$ and is called the \emph{fold}.  When the fold $Z$ is empty, $\omega$ is a genuine symplectic form.  The pair $(M,\omega)$ is called a \emph{folded symplectic manifold}.  An analog of Darboux's theorem for folded symplectic forms says that near any point $p$ of $Z$ there is a coordinate chart centered at $p$ where the form $\omega$ is
\[
x_1dx_1\wedge dy_1+dx_2\wedge dy_2+\dots+dx_n\wedge dy_n.
\]

Since the restriction of $\omega$ to $Z$ is assumed to have maximal rank, it has a one-dimensional kernel at each point of $Z$ and determines a line field on $Z$ called the \emph{null foliation}.  If the null foliation is the vertical bundle of some principal $S^1$-fibration $Z\to B$ over a compact base $B$, then the folded symplectic form $\omega$ is called an \emph{origami form} and the pair $(M,\omega)$ is called an \emph{origami manifold}.

The action of a Lie group $G$ on an origami manifold $(M,\omega)$ is \emph{Hamiltonian} if it admits a \emph{moment map} $\mu\colon M\to \mathfrak g^\ast$ satisfying the conditions:
\begin{itemize}
\item $\mu$ is equivariant with respect to the given action of $G$ on $M$ and the coadjoint action of $G$ on the vector space $\mathfrak g^\ast$ dual to the Lie algebra $\mathfrak g$ of $G$;
\item $\mu$ collects Hamiltonian functions, that is, $d\langle \mu,X\rangle=\iota_{X^\sharp}\omega$ for any $X\in \mathfrak g$, where $X^\sharp$ is the vector field on $M$ generated by $X$.
\end{itemize}

\begin{definition}
A \emph{toric origami manifold} $(M,\omega,T,\mu)$, abbreviated as $M$,  is a compact connected origami manifold $(M,\omega)$ equipped with an effective Hamiltonian action of a torus $T$ with $\dim T=\frac{1}{2}\dim M$ and with a choice of a corresponding moment map $\mu$.
\end{definition}

When the fold $Z$ is empty, a toric origami manifold is a symplectic toric manifold.  A famous theorem by Delzant \cite{delz88} says that symplectic toric manifolds are classified by their moment images called \emph{Delzant polytopes}.

The moment data of a toric origami manifold can be encoded into an \emph{origami template} $(\cP,\cF)$, where $\cP$ is a (nonempty) finite collection of $n$-dimensional Delzant polytopes in $\R^n$ and $\cF$ is a collection of facets and pairs of facets of polytopes in $\cP$ satisfying the following properties:
\begin{enumerate}
\item[(O1)] for each pair $\{F,F'\}\in\cF$, the corresponding polytopes $P$ and $P'$ in $\cP$ agree near those facets, that is, there is an open set $\mathcal{U}$ of $\R^n$ such that $\mathcal{U}\cap P=\mathcal{U}\cap P'$;\label{item:pair of folding facets}
\item[(O2)] if a facet $F$ occurs in $\cF$, either by itself or as a member of a pair, then neither $F$ nor any of its neighboring facets occur elsewhere in $\cF$;\label{item:property of folding facets}
\item[(O3)] the topological space, denoted $|(\cP,\cF)|$, constructed from the disjoint union $\sqcup P_j$, $P_j\in\cP$, by identifying facet pairs in $\cF$ is connected.
\end{enumerate}

The following is a generalization of the theorem by Delzant to toric origami manifolds.

\begin{theorem}[\cite{ca-gu-pi11}] \label{theo:3.1}
Assigning the moment data of a toric origami manifold induces a one-to-one correspondence
\begin{equation*}
\left\{\text{toric origami manifolds} \right\}\leftrightsquigarrow\{\text{origami templates}\}
\end{equation*}
up to equivariant origami symplectomorphism on the left-hand side, and affine equivalence on the right-hand side.
\end{theorem}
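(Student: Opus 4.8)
This statement is a folded (``origami'') analogue of Delzant's classification, so I would follow the architecture of Delzant's proof, replacing the global smoothness of the symplectic form by a controlled equivariant model along the fold, and establish the two directions separately. The moment-data assignment $M\mapsto(\cP,\cF)$ is defined as follows. By the folded Darboux theorem recalled in Section~\ref{sec:3}, $\omega$ is a genuine symplectic form on $M\setminus Z$, so each connected component of $M\setminus Z$ is an open symplectic manifold carrying an effective Hamiltonian $T$-action of half its dimension, with isolated fixed points as in Section~\ref{sec:2}. Using the Atiyah--Guillemin--Sternberg convexity theorem together with the equivariant local normal form at the fixed points, one sees that the closure of the $\mu$-image of each such component is a simple rational polytope whose edge directions at every vertex form a lattice basis, i.e.\ a Delzant polytope; collecting these gives $\cP$. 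The fold $Z$, being $T$-invariant, maps onto the common boundary facets of adjacent polytopes, and its connected components determine the folding data $\cF$.

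To make this precise and to verify (O1)--(O3) I would establish an equivariant normal form along $Z$. The null foliation of $\omega|_Z$ is the vertical bundle of the principal $S^1$-fibration $Z\to B$, and since the $T$-action preserves $\omega$ and $Z$ it descends to a Hamiltonian action on the compact base $B$, making $B$ a symplectic toric manifold of dimension $2n-2$ whose moment polytope is precisely the shared facet. In the folded Darboux coordinate $x_1$ the corresponding moment component is, up to sign, $x_1^2/2$, so $\mu$ is two-to-one across $Z$ and folds the two local polytopes onto one another; this gives (O1), namely that the two polytopes meeting along a folded facet agree on a neighborhood of it. Property (O2) follows because the $S^1$-fibration structure forces distinct folds to possess disjoint $T$-invariant collar neighborhoods, so a folded facet and its neighbors cannot recur in $\cF$, and (O3) is simply the connectedness of $M$ transported through the identification space $|(\cP,\cF)|$.

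For the reverse direction I would first prove surjectivity by construction: given a template $(\cP,\cF)$, run the Delzant construction on each polytope $P_j\in\cP$ to obtain a symplectic toric manifold $(M_j,\omega_j,\mu_j)$, and then glue these pieces along the folds prescribed by $\cF$. For a folding pair $\{F,F'\}$ with corresponding polytopes $P$ and $P'$, condition (O1) guarantees that the characteristic submanifolds $\mu_P^{-1}(F)$ and $\mu_{P'}^{-1}(F')$ are isomorphic symplectic toric manifolds, both realizing the base $B$; one then attaches $M_P$ and $M_{P'}$ by a radial-blow-up of their $T$-invariant collars so that these submanifolds become a single fold $Z$ carrying the expected $S^1$-fibration. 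The resulting $T$-invariant $2$-form is closed, has transversally vanishing top power along $Z$, and has the prescribed null foliation, hence is a toric origami form; (O2)--(O3) ensure the gluings are disjoint and produce a connected $M$ whose moment data returns $(\cP,\cF)$. Injectivity (up to equivariant origami symplectomorphism) I would prove by patching: on the symplectic pieces away from the folds Delzant's uniqueness theorem furnishes $T$-equivariant symplectomorphisms matching moment maps, the equivariant normal form shows the gluing along each fold is determined by the template, and an equivariant Moser-type argument with a $T$-invariant partition of unity subordinate to the collar decomposition splices these into a global $T$-equivariant origami symplectomorphism intertwining $\mu$ and $\mu'$.

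The main obstacle is the analysis along the fold: constructing the equivariant folded normal form, identifying $B$ as a symplectic toric manifold, and, most delicately, verifying in the existence direction that the glued form genuinely has the \emph{origami} property, i.e.\ that the null foliation along the new $Z$ is the vertical bundle of a principal $S^1$-fibration rather than merely a line field. This is exactly the feature separating origami forms from general folded symplectic forms, and it is where the $S^1$-fibration data encoded in (O1) must be used throughout, both to perform the gluing and to reconcile it with Delzant's data on either side.
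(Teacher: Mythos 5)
The paper does not prove this theorem: it is imported verbatim from \cite{ca-gu-pi11}, so there is no in-paper argument to compare yours against. Measured against the proof in that reference, your architecture is the right one --- decompose along the fold, apply Delzant's theorem piecewise, reconstruct by gluing along the $S^1$-fibred fold, and get uniqueness by patching equivariant symplectomorphisms --- but one step in your forward direction is genuinely broken as written.

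You extract the Delzant polytopes by applying Atiyah--Guillemin--Sternberg convexity and the local normal form to the connected components of $M\setminus Z$. Those components are open, noncompact symplectic manifolds, and neither the convexity theorem nor Delzant's classification applies to them directly; in particular the claim that the closure of the moment image of such a component is a Delzant polytope is unsupported at that point. The argument in \cite{ca-gu-pi11} first performs symplectic \emph{cutting} along $Z$: the principal $S^1$-fibration $Z\to B$ gives a circle acting on an invariant tubular neighborhood of the fold, and cutting with respect to it caps off each side, replacing every component of $M\setminus Z$ by a \emph{closed} symplectic toric manifold whose moment polytope is the corresponding $P_j\in\cP$ and in which the cut locus (a copy of $B$) sits over the folded facet. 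Delzant's theorem is then applied to these compact cut pieces, and your ``radial blow-up'' in the existence direction is precisely the inverse of this operation --- so you have half of the mechanism but omit the half that makes the forward direction work. Property (O1) and (O2) also come out of this cutting analysis (the two cut pieces adjacent to a fold contain isomorphic copies of $B$ over agreeing facets, and a facet meeting a folded facet corresponds to a characteristic submanifold intersecting $Z$, which cannot itself be a fold), rather than from the rather vague ``disjoint collars'' argument you give for (O2). With the cutting step inserted, the rest of your outline matches the known proof.
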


\begin{remark}
A toric origami manifold is not necessarily orientable, e.g. $\R P^{2n}$ admits a toric origami form (\cite{ca-gu-pi11}).  If a toric origami manifold is orientable, then there is no single facet in $\cF$ of the associated origami template $(\cP,\cF)$.  An origami template is said to be \emph{oriented} if the polytopes in $\cP$ come with an orientation and $\cF$ consists solely of pairs of facets which belong to polytopes with opposite orientations. The correspondence in Theorem~\ref{theo:3.1} also induces a one-to-one correspondence between oriented ones.
\end{remark}

\begin{example} \label{exam:3.1}
The unit sphere $S^{2n}\subset \C^n\oplus\R$ admits a toric origami form with the fold $S^{2n-1}\subset\C^n\oplus\{0\}$, where the origami form and the $T^n$-action on $S^{2n}$ is the restriction of the standard form $\sum_{i=1}^ndx_i\wedge dy_i$ and the standard $T^n$-action on $\C^n\oplus \R$ to $S^{2n}$.  Take $n=2$ and let $\cP$ be the set of two right-angled isosceles triangles with opposite orientations, and let $\cF$ have only one element, the pair of hypotenuse. Then $(\cP,\cF)$ is the oriented origami template corresponding to $S^4$, see \cite{ca-gu-pi11} for details.
\end{example}

Let $(\cP,\cF)$ be an origami template and $M$ be the associated toric origami manifold.
The topological space $|(\cP,\cF)|$ is a manifold with corners with the face structure induced from the face structures on polytopes in $\cP$, and $|(\cP,\cF)|$ is homeomorphic to $M/T$ as manifolds with corners.
The \emph{graph underlying the origami template}, denoted $\Gamma(\cP,\cF)$, has vertices corresponding to polytopes in $\cP$ and edges corresponding to the pairs in $\cF$. The graph $\Gamma(\cP,\cF)$ is connected by (O3) and has the same homotopy type as $|(\cP,\cF)|$, so that the orbit space $M/T$ is contractible or homotopy equivalent to a bouquet of $S^1$. The origami template $(\cP,\cF)$ is called \emph{acyclic} if the graph $\Gamma(\cP,\cF)$ is acyclic, i.e., a tree, and in this case $|(\cP,\cF)|$ has a vertex, equivalently $M$ has a fixed point.

An origami template $(\cP,\cF)$ is called \emph{co\"orientable} if $\cF$ consists only of pairs of facets of polytopes in $\cP$.  We say that a toric origami manifold is \emph{co\"orientable} if the associated origami template is co\"orientable.  An orientable origami template is co\"orientable (in other words, an orientable toric origami manifold is co\"orientable) but the converse does not hold (see \cite[Fig. 9]{ca-gu-pi11}).

\begin{theorem}[\cite{ho-pi12}] \label{theo:3.2}
Let $M$ be a co\"orientable toric origami manifold.  Then the $T$-action on $M$ is locally standard and if the origami template associated to $M$ is acyclic, then $H^{odd}(M)=0$.  Moreover, $M/T$ is face-acyclic if and only if the associated origami template is acyclic.
\end{theorem}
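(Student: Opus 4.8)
The plan is to prove the three assertions in order, obtaining the cohomological vanishing at the end from the first two together with Theorem~\ref{theo:2.1}. First I would establish local standardness. Away from the fold $Z$ the form $\omega$ is genuinely symplectic, and the components of $M\setminus Z$ are open $T$-invariant subsets of the symplectic toric manifolds attached to the polytopes of $\cP$, where the action is locally standard by Delzant's theorem. The whole difficulty is to exhibit a standard chart along $Z$. Here I would invoke the equivariant normal-form description of a neighbourhood of the fold from \cite{ca-gu-pi11}: a tubular neighbourhood of $Z$ is reconstructed from the principal $S^1$-bundle $Z\to B$ coming from the null foliation together with the symplectic toric data on $B$. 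Coörientability is exactly what makes this gluing two-sided, so that the two sheets meeting along $Z$ correspond to the two polytopes sharing a folding facet and assemble into a single $T$-invariant chart equivariantly diffeomorphic to an open invariant subset of a faithful $T$-representation: over the relative interior of a folding facet the orbits are free and the model is $B\times(-\varepsilon,\varepsilon)$ with the free circle direction, while over lower-dimensional faces one splits off the standard $\C$-factors using the Delzant condition and matches them along the identified facets. I expect this normal-form analysis near $Z$ to be the main obstacle, since it is precisely where coörientability is used and where orientability of $M$ plays no role.

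Next I would prove that $M/T$ is face-acyclic if and only if $(\cP,\cF)$ is acyclic, working entirely on $M/T\cong|(\cP,\cF)|$. For the forward direction, if $M/T$ is face-acyclic then in particular $M/T$ itself is acyclic; since $|(\cP,\cF)|$ is homotopy equivalent to the graph $\Gamma(\cP,\cF)$, this forces $\Gamma(\cP,\cF)$ to be contractible, hence a tree, so the template is acyclic. For the converse I would argue that every face $G$ of $|(\cP,\cF)|$ is itself built, via (O1), by gluing faces of the polytopes of $\cP$ along the induced folding identifications, so that $G=|(\cP_G,\cF_G)|$ for the induced lower-dimensional template; by (O2) these identifications stay disjoint, so the same nerve argument that gives $|(\cP,\cF)|\simeq\Gamma(\cP,\cF)$ gives $G\simeq\Gamma(\cP_G,\cF_G)$, a graph carrying no cycle absent from $\Gamma(\cP,\cF)$. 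When $\Gamma(\cP,\cF)$ is a tree, each such $\Gamma(\cP_G,\cF_G)$ is again a tree, so every face $G$ is contractible and in particular acyclic; hence $M/T$ is face-acyclic.

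Finally I would deduce $H^{odd}(M)=0$ in the acyclic case. The point to stress is that although a coörientable $M$ need not be orientable in general, acyclicity removes this difficulty: when $\Gamma(\cP,\cF)$ is a tree it is simply connected, so orientations can be propagated across the folding pairs without monodromy, the polytopes of $\cP$ can be oriented so that each pair in $\cF$ joins oppositely oriented polytopes, the template becomes oriented, and $M$ becomes orientable. An acyclic template also has a vertex, so $M$ has a $T$-fixed point; together with local standardness this makes $M$ a torus manifold with locally standard action. By the converse direction just proved, acyclicity gives that $M/T$ is face-acyclic, so Theorem~\ref{theo:2.1}(1) yields $H^{odd}(M)=0$, completing the proof.
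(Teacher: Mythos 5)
This theorem is imported from Holm--Pires \cite{ho-pi12}; the paper contains no proof of it, and the only in-paper indication of an argument is the remark that the main part (the vanishing $H^{odd}(M)=0$) follows from Theorem~\ref{theo:2.1}. Your third step reproduces exactly that deduction --- orient the template using that a tree is bipartite, get a fixed point from a leaf of the tree, combine local standardness and face-acyclicity, and apply Theorem~\ref{theo:2.1}(1) --- so on the one point the paper actually addresses, you take the same route. The remaining two steps are the content of \cite{ho-pi12} itself rather than of this paper: your combinatorial proof of ``face-acyclic iff template acyclic'' is sound, though the key assertion that the gluing graph of each face carries no new cycle deserves a word of justification (by (O1) and (O2) each polytope face is glued along at most one face per fold of its ambient polytope, so the gluing graph of a face maps to $\Gamma(\cP,\cF)$ by a locally injective graph map and a tree pulls back to a forest); and the local-standardness step is only a sketch that defers all the real work near the fold $Z$ to the equivariant normal form of \cite{ca-gu-pi11}, which is where the content lies and which cannot be checked from what you have written, though you correctly identify co\"orientability as the hypothesis that makes the two sheets along $Z$ assemble into a standard chart.
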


\begin{remark} The main part in Theorem~\ref{theo:3.2} also follows from Theorem~\ref{theo:2.1} as noted in \cite{ho-pi12}.
\end{remark}

Since $T$ is connected, the quotient map $q\colon M\to M/T$ induces an epimorphism $q_*\colon \pi_1(M)\to \pi_1(M/T)$ (see \cite[Corollary 6.3 in p.91]{bred72}).
The epimorphism $q_*$ is often an isomorphism (see Lemma~\ref{lemm:2.1}) but not always an isomorphism even if $M$ has a fixed point.  For instance, the orbit space of $\R P^{2n}$ by a standard $T$-action is contractible while $\pi_1(\R P^{2n})$ is of order two.
\begin{proposition} \label{prop:3.1}
If a toric origami manifold $M$ has a fixed point and is co\"orientable, then the quotient map $q\colon M\to M/T$ induces an isomorphism $q_*\colon \pi_1(M)\to \pi_1(M/T)$ and hence $\pi_1(M)$ is a free group.  Moreover, a co\"orientable toric origami manifold is simply connected if and only if the associated origami template is acyclic.
\end{proposition}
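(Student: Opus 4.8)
The plan is to reduce everything to the homotopy type of the graph $\Gamma(\cP,\cF)$ by means of the quotient map, drawing the needed local standardness from Theorem~\ref{theo:3.2}.

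For the first assertion I would proceed exactly as in Lemma~\ref{lemm:2.1}. By Theorem~\ref{theo:3.2} the $T$-action on a co\"orientable $M$ is locally standard, and by hypothesis $M^T\neq\emptyset$. The one point demanding attention is that a co\"orientable toric origami manifold need not be orientable, so it need not be a torus manifold and Lemma~\ref{lemm:2.1} cannot be invoked as a black box. However, inspecting its proof shows that orientability is never used: the argument rests only on local standardness (which makes $M/T$ a manifold with boundary and $M^\circ\to M^\circ/T$ a principal $T$-bundle), on the codimension-at-least-two property of the subtorus fixed sets $M^G$, and on the existence of a fixed point that allows a principal orbit to shrink to a point, forcing $\iota_*\circ\varphi_*$ to be trivial in the diagram \eqref{eq:cd}. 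All three hold in our situation, so the same diagram chase yields that $q_*\colon\pi_1(M)\to\pi_1(M/T)$ is an isomorphism. I expect this verification---that orientability is dispensable---to be the only subtle step; the rest is formal.

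Next I would compute $\pi_1(M/T)$. As recalled in Section~\ref{sec:3}, $M/T$ is homeomorphic to $|(\cP,\cF)|$, which deformation retracts onto the connected graph $\Gamma(\cP,\cF)$. Since the fundamental group of a connected graph is free, $\pi_1(M/T)$ is free, and therefore so is $\pi_1(M)$ by the isomorphism just established. This finishes the first assertion.

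For the final equivalence I would treat the two directions separately, using the graph throughout. If the template is acyclic then $\Gamma(\cP,\cF)$ is a tree and, as noted in Section~\ref{sec:3}, $M$ has a fixed point; the first part then gives $\pi_1(M)\cong\pi_1(M/T)\cong\pi_1(\Gamma(\cP,\cF))=1$, so $M$ is simply connected. Conversely, no fixed-point hypothesis is needed: because $T$ is connected the map $q_*$ is always an epimorphism, so $\pi_1(M)=1$ forces $\pi_1(M/T)=\pi_1(\Gamma(\cP,\cF))=1$, whence the connected graph $\Gamma(\cP,\cF)$ is a tree and the template is acyclic.
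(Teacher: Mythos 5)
Your proposal is correct and follows essentially the same route as the paper: invoke Theorem~\ref{theo:3.2} for local standardness, observe that the proof of Lemma~\ref{lemm:2.1} never uses orientability and so applies to a co\"orientable (possibly non-orientable) $M$ with a fixed point, and then read off freeness of $\pi_1(M)$ from the homotopy equivalence of $M/T$ with a bouquet of circles. Your explicit two-direction treatment of the final equivalence (using the unconditional surjectivity of $q_*$ for the forward direction) is exactly what the paper's terser closing sentence relies on.
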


\begin{proof}
Since $M$ is co\"orientable, the $T$-action on $M$ is locally standard by Theorem~\ref{theo:3.2}.  Therefore the former statement in the proposition follows from Lemma~\ref{lemm:2.1}.  Precisely speaking, a torus manifold is assumed to be orientable while a co\"orientable toric origami manifold is not necessarily orientable as remarked above.  However, the proof for Lemma~\ref{lemm:2.1} works without orientability of $M$.  Since $M/T$ is homotopy equivalent to a bouquet of $S^1$ and $\pi_1(M)$ is isomorphic to $\pi_1(M/T)$, $\pi_1(M)$ is a free group.

The latter statement in the proposition follows from the former because the toric origami manifold associated to an acyclic origami template has a fixed point as remarked before.
\end{proof}

In the next section, we will discuss $\pi_1(M)$ of an orientable toric origami manifold $M$ which may have no fixed point.

\section{Multi-fans associated to oriented origami templates}\label{sec:4}

When $M$ is a symplectic toric manifold, each characteristic submanifold of $M$ is a symplectic manifold with the restricted symplectic form.   Therefore $M$ has a canonical omniorientation induced from the symplectic form and the multi-fan of $M$ is nothing but the normal fan of the Delzant polytope associated to $M$.  Similarly, one can find the multi-fan of an oriented toric origami manifold $M$ (with some omniorientation) from the oriented origami template associated to $M$.  We shall explain this in this section.

We introduce an operation on multi-fans.  For a non-singular complete multi-fan $\Delta:=(\Sigma,C,w^\pm)$ and an edge $L$, that is one-dimensional cone in $\Delta$, we denote by $\nbd(L)$ the set of all cones containing $L$.
Let $\Delta':=(\Sigma',C',{w'}^\pm)$ be another multi-fans of the same dimension as $\Delta$. Suppose that edges $L$ and $L'$ in $\Delta$ and $\Delta'$ satisfy the following:
\begin{equation*}\label{assumption on nbd of edge}
\nbd(L)=\nbd(L'), \quad w^\pm(I)={w'}^\mp(I') \mbox{ for }C(I)=C'(I')\in\nbd(L)=\nbd(L').
\end{equation*}
Then we obtain a new multi-fan $\Delta\dia_{L,L'}\Delta'$ by removing $\nbd(L)$ and $\nbd(L')$ from $\Delta$ and $\Delta'$ and then identifying the boundary cones in a natural way. We call $\dia_{L,L'}$ the \emph{diamond operation} at $L$ and $L'$.

\begin{example} \label{ex:diam}
Consider the normal fan of the square
$$\left\{(x,y)\in\R^2\,\mid \, -1\leq x\leq 1, -1\leq y\leq 1\right\}.$$
Let $\Delta$ be the normal fan with weight functions $w^+=1\mbox{ and }w^-=0$ on the $2$-dimensional cones. If we give opposite weight functions ${w'}^\pm(=w^\mp)$ to the $2$-dimensional cones, then we get another non-singular complete multi-fan, denoted $\Delta'$.  Let $L$ and $L'$ be the edge generated by $(0,-1)$ in $\Delta$ and $\Delta'$. Then $\Delta\dia_{L,L'}\Delta'$ is a non-singular complete multi-fan where the 2-dimensional cones are two first quadrants with opposite weight functions and two second quadrants with opposite weight functions.
\end{example}

\begin{definition}
We define a multi-fan associated to an oriented Delzant polytope $P$ to be the normal fan of $P$ with $(w^+,w^-)=(1,0)$ if $P$ is positively oriented and $(w^+,w^-)=(0,1)$ otherwise.
For an oriented origami template $(\cP,\cF)$, we define $\Delta(\cP,\cF)$ to be the multi-fan obtained by performing diamond operations on the non-singular complete multi-fans associated to the oriented Delzant polytopes in $\cP$ along the edges corresponding to the facets in $\cF$.
\end{definition}

An oriented origami template $(\cP,\cF)$ is co\"oriented, so the $T$-action on the toric origami manifold $M$ associated to $(\cP,\cF)$ is locally standard by Theorem~\ref{theo:3.2}.  If any intersection of facets of $|(\cP,\cF)|$ is connected and contains a vertex, then $\Delta(\cP,\cF)$ agrees with the multi-fan $\Delta(M)$ of $M$ with an appropriate omniorientation.  In this case, any cone is contained in an $n$-dimensional cone, where $n$ is the dimension of $(\cP,\cF)$, and the weight $(w^+,w^-)$ on each $n$-dimensional cone is either $(1,0)$ or $(0,1)$. However, $\Delta(\cP,\cF)$ may be different from $\Delta(M)$ as shown in the following example.

\begin{example}\label{ex:S4}
As mentioned in Example~\ref{exam:3.1}, the oriented origami template $(\cP,\cF)$ corresponding to $S^4$ consists of two copies of a right-angled isosceles triangle whose fold facet is a hypotenuse. Then the multi-fan $\Delta(\cP,\cF)$ has two edges and two $2$-dimensional cones with weights $(1,0)$ and $(0,1)$ respectively. In Figure~\ref{fig:template and multi-fan of S4}, the two $2$-dimensional cones are respectively black and red hatched.  On the other hand, the multi-fan $\Delta(S^4)$ of an omnioriented $S^4$ with the standard $T^2$-action has two edges and one 2-dimensional cone with weight $(w^+,w^-)=(1,1)$. The reason why $\Delta(\cP,\cF)$ and $\Delta(S^4)$ are different is that the intersections of the two facets of $|(\cP,\cF)|$ is disconnected, actually it consists of two vertices.  Note that if we identify the two $2$-dimensional cones in $\Delta(\cP,\cF)$ and assign the sum of the weights on the cones to the identified cone, then we will obtain $\Delta(S^4)$.
        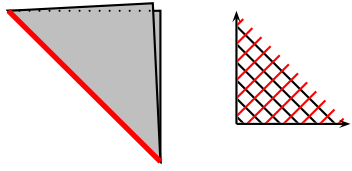
\begin{figure}[ht]
            \psset{unit=.5cm}
            \begin{center}
            \begin{pspicture}(0,0)(9,4)
                \pspolygon[fillstyle=solid,fillcolor=lightgray](0,4)(4,0)(4,4)
                \pspolygon[fillstyle=solid,fillcolor=lightgray](0,4)(4,0)(3.8,4.2)
                \psline[linestyle=dotted](0,4)(4,4)
                \psline[linewidth=2pt,linecolor=red](4,0)(0,4)
                \pspolygon[fillstyle=hlines,hatchangle=135,linecolor=white](6,1)(6,4)(9,1)
                \pspolygon[fillstyle=hlines,hatchangle=45,hatchcolor=red,linecolor=white](6,1)(6,4)(9,1)
                \psline{->}(6,1)(6,4)
                \psline{->}(6,1)(9,1)
            \end{pspicture}
            \end{center}
            \caption{Origami template and multi-fan of $S^4$}
            \label{fig:template and multi-fan of S4}
        \end{figure}
        \end{example}

The multi-fan $\Delta(\cP,\cF)$ in Example~\ref{ex:S4} is complete although it is different from $\Delta(S^4)$.  However, $\Delta(\cP,\cF)$ may not be complete as shown in the following example.
\begin{example} \label{exam:4.2}
Let $P$ be the square in Example~\ref{ex:diam} and let $F^+$ and $F^-$ be the sides of $P$ defined by $y=1$ and $y=-1$ respectively.  Let $\bar P, \bar F^\pm$ be copies of $P, F^\pm$ respectively.  Then $\cP=\{P, \bar P\}$ and $\cF^-=\{\{F^-,\bar F^-\}\}$ form an origami template and $\Delta(\cP,\cF^-)$ is the non-singular complete multi-fan in Example~\ref{ex:diam}.  However, if $\cF^\pm=\{\{F^-,\bar F^-\}, \{F^+, \bar F^+\}\}$, then $\Delta(\cP,\cF^\pm)$ has two edges but no $2$-dimensional cone.  Therefore $\Delta(\cP,\cF^\pm)$ is not complete because the existence of an $n$-dimensional cone is required in the definition of completeness for an $n$-dimensional multi-fan, see Section~\ref{sec:2}. Note that the toric origami manifold associated to $(\cP,\cF^\pm)$ has no fixed point, so it is not a torus manifold because the existence of a fixed point is required in the definition of a torus manifold, see Section~\ref{sec:2}.  In fact, the toric origami manifold associated to $(\cP,\cF^\pm)$ is equivariantly diffeomorphic to the product of $S^2$ with the standard $S^1$-action and the 2-dimensional torus with a free $S^1$-action.
\end{example}

Let $M$ be an orientable toric origami manifold.  We choose an orientation on $M$ and let $(\cP,\cF)$ be the oriented origami template associated to $M$ with the orientation.  The multi-fan $\Delta(\cP,\cF)$ is defined in $N_\R=N\otimes\R$ where $N=H_2(BT;\Z)$ is a lattice of rank $n=\dim T$. Let $N_\Delta$ be the sublattice of $N$ generated by primitive vectors sitting in one-dimensional cones in $\Delta(\cP,\cF)$. Note that $N_\Delta$ is independent of the choice of the orientation on $M$.  If $\Delta(\cP,\cF)$ has an $n$-dimensional cone (equivalently if $M$ has a fixed point), then $N_\Delta$ agrees with $N$.  Otherwise $N_\Delta$ may not agree with $N$ but the rank of $N_\Delta$ is at least $n-1$ because $\Delta(\cP,\cF)$ contains an $(n-1)$-dimensional cone.  More precisely, since any $(n-1)$-dimensional cone in $\Delta(\cP,\cF)$ is a facet of a non-singular $n$-dimensional cone (because they are associated to Delzant polytopes in $\cP$), the quotient group $N/N_\Delta$ is a finite or infinite cyclic group.

\begin{proposition} \label{prop:4.1}
Let $M$ be an orientable toric origami manifold and let $N_\Delta$ be as above.  Let $q_*\colon \pi_1(M)\to \pi_1(M/T)$ be the homomorphism induced from the quotient map $q\colon M\to M/T$.
Then there is an epimorphism
\[
\psi\colon N/N_\Delta \times \pi_1(M/T) \to \pi_1(M)
\]
such that the composition $q_*\circ \psi \colon N/N_\Delta \times \pi_1(M/T)\to \pi_1(M/T)$ is the projection on the second factor, in particular,
$\ker\psi$ is contained in $N/N_\Delta$.
\end{proposition}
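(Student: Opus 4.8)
The plan is to reuse the commutative diagram \eqref{eq:cd} from the proof of Lemma~\ref{lemm:2.1}. This is legitimate here: since $M$ is orientable it is coorientable, so by Theorem~\ref{theo:3.2} the $T$-action is locally standard, and the argument behind \eqref{eq:cd}, \eqref{eq:cd1}, \eqref{eq:cd2} does not use orientability. First I would identify $\ker q_*$. Because $\iota_*$ is an epimorphism by \eqref{eq:cd1}, $\bar\iota_*$ is an isomorphism by \eqref{eq:cd2}, and $q^\circ_*$ is the quotient map of the principal $T$-bundle $M^\circ\to M^\circ/T$ with $\ker q^\circ_*=\im\varphi_*$, a direct diagram chase yields $\ker q_*=\iota_*(\im\varphi_*)=\im(\iota_*\varphi_*)$, where $\varphi_*\colon\pi_1(T)=N\to\pi_1(M^\circ)$. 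Thus $\ker q_*$ is precisely the image of the lattice $N$ under $\iota_*\varphi_*$.

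Next I would establish two properties of this image. (a) \emph{It is central in $\pi_1(M)$.} For $v\in N$ represented by a circle subgroup $\lambda\colon S^1\to T$ and any loop $\alpha$ in $M$ based at a principal point $x$, the map $(s,t)\mapsto\lambda(t)\cdot\alpha(s)$ is a based map of the torus into $M$ whose two circle factors are $\alpha$ and the orbit loop $\beta(t)=\lambda(t)\cdot x$; hence $[\alpha]$ and $[\beta]$ commute, so every orbit-loop class is central and therefore so is $\ker q_*=\im(\iota_*\varphi_*)$. (b) \emph{The map $\iota_*\varphi_*$ kills $N_\Delta$.} If $v_i$ is the primitive generator of a one-dimensional cone, then $v_i(S^1)$ fixes the characteristic submanifold $M_i$ pointwise and rotates its normal disk, so for $x$ near $M_i$ the orbit loop $t\mapsto v_i(e^{2\pi i t})\cdot x$ contracts to a point as $x$ is pushed radially into $M_i$; hence $\iota_*\varphi_*(v_i)=1$. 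Since the $v_i$ generate $N_\Delta$, the map $\iota_*\varphi_*$ factors as $\bar\psi\colon N/N_\Delta\to\pi_1(M)$, and $\im\bar\psi=\ker q_*$.

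Finally I would split $q_*$. As noted in Section~\ref{sec:3}, $M/T$ is homotopy equivalent to a bouquet of circles, so $\pi_1(M/T)$ is free and the epimorphism $q_*$ admits a section $s\colon\pi_1(M/T)\to\pi_1(M)$ obtained by lifting free generators. I then define $\psi(a,b):=\bar\psi(a)\,s(b)$. Centrality from (a) makes $\psi$ a homomorphism on the direct product $N/N_\Delta\times\pi_1(M/T)$; any $[\gamma]\in\pi_1(M)$ satisfies $[\gamma]\,s(q_*[\gamma])^{-1}\in\ker q_*=\im\bar\psi$, giving surjectivity; and $q_*\psi(a,b)=q_*\bar\psi(a)\,q_*s(b)=b$ shows $q_*\circ\psi$ is the second projection, whence $\ker\psi\subseteq N/N_\Delta$.

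The main obstacle is step (b): the geometric assertion that every generator of $N_\Delta$ maps to a null-homotopic orbit loop. This is exactly where local standardness near a characteristic submanifold (the slice model $\C\oplus\cdots$) must be invoked to build the radial contracting homotopy, and where basepoint bookkeeping could be a nuisance — but this is harmless precisely because the classes involved are central by (a). The centrality statement (a) is the other indispensable ingredient, since without it $\psi$ would fail to be a homomorphism on the direct product.
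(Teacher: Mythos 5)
Your proof is correct, but it reaches the product structure by a different mechanism than the paper. The paper's proof also reuses diagram \eqref{eq:cd}, but its key step is that $M^\circ/T$ is homotopy equivalent to a bouquet of circles, hence $H^2(M^\circ/T;\Z)=0$ and the principal $T$-bundle $M^\circ\to M^\circ/T$ is \emph{trivial}; this gives $\pi_1(M^\circ)\cong\pi_1(T)\times\pi_1(M^\circ/T)$ on the nose, and $\psi$ is just $\iota_*$ of this product composed with killing $N_\Delta$ (the shrinking-orbit argument, which you reproduce verbatim). You instead use the bouquet structure only through the freeness of $\pi_1(M/T)$, splitting the surjection $q_*$ by a section $s$, and you recover the direct-product homomorphism from the centrality of orbit-loop classes together with the diagram chase identifying $\ker q_*=\im(\iota_*\varphi_*)$. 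Both routes are sound; the paper's is shorter because the bundle trivialization hands it the splitting and the commutation simultaneously, whereas yours is more robust in that it never needs the bundle to be trivial --- only that the kernel of $q_*$ consists of central orbit-loop classes and that the base group is free --- so it would survive in situations where one only knows $\pi_1(M/T)$ is free but not that $H^2(M^\circ/T)$ vanishes. Your identification $\ker q_*=\im(\iota_*\varphi_*)$ is also slightly more information than the paper extracts, though the proposition does not require it. One cosmetic remark: the free null-homotopy in your step (b) forces $[\beta]=1$ already because the only conjugate of the identity is the identity, so centrality is not actually needed to handle the basepoint there; it is needed only to make $\psi$ a homomorphism on the direct product, exactly as you say at the end.
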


\begin{proof}
In the following, we use the notations in the proof of Lemma~\ref{lemm:2.1} freely.  As remarked in the paragraph after  Example~\ref{exam:3.1}, $M/T$ is homotopy equivalent to a bouquet of $S^1$ and so is $M^\circ/T$.  Therefore, $H^2(M^\circ/T;\Z)$ vanishes and the principal $T$-bundle $q^\circ\colon M^\circ \to M^\circ/T$ in \eqref{eq:cd} is trivial, so
$\pi_1(M^\circ)\cong \pi_1(T)\times\pi_1(M^\circ/T)$.  Since $\pi_1(M^\circ/T)\cong \pi_1(M/T)$ via $\bar\iota_*$, $\iota_*$ induces an epimorphism
\begin{equation} \label{eq:psi'}
\psi'\colon \pi_1(T)\times \pi_1(M/T)\to \pi(M)
\end{equation}
such that the composition $q_*\circ\psi'$ is the projection on the second factor.
Since $N=H_2(BT;\Z)=\Hom(S^1,T)$, $\pi_1(T)$ can be identified with $N$.  The circle subgroup $S_i$ which fixes $M_i$ pointwise corresponds to a primitive vector $v_i$ in $N=\Hom(S^1,T)$ such that $v_i(S^1)=S_i$.  The cone spanned by $v_i$ belongs to $\Delta(\cP,\cF)$ and any one-dimensional cone in $\Delta(\cP,\cF)$ is obtained in this way.  Since the orbit $S_ix$ $(x\in M^\circ)$ shrinks to a point in $M$, the subgroup $N_\Delta$ maps to the identity element in $\pi_1(M)$ through $\psi'$ in \eqref{eq:psi'}. Thus $\psi'$ induces the desired epimorphism $\psi$ in the proposition.
\end{proof}

\begin{example}
The toric origami manifold $M$ associated to the origami template $(\cP,\cF^\pm )$ in Example~\ref{exam:4.2} is equivariantly diffeomorphic to the product of $S^2$ with the standard $S^1$-action and the 2-dimensional torus with a free $S^1$-action.  In this case, $\pi_1(M)\cong\Z\oplus\Z$, $\pi_1(M/T)\cong\Z$ and $\ker q_*$ is an infinite cyclic group while $N_\Delta$ is a sublattice of rank one and $N/N_\Delta$ is also an infinite cyclic group.
\end{example}

\begin{corollary} \label{coro:4.1}
The fundamental group of a non-simply connected orientable toric origami manifold $M$ is isomorphic to the product of a finite or infinite cyclic group (possibly trivial) and a non-trivial free group.
\end{corollary}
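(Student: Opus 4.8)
The plan is to read off the group-theoretic structure directly from Proposition~\ref{prop:4.1} and then locate the non-trivial free factor by means of the fixed-point criterion. Write $A:=N/N_\Delta$ and $F:=\pi_1(M/T)$. As recorded in the paragraph preceding Proposition~\ref{prop:4.1}, $A$ is a finite or infinite cyclic group; and since $M/T$ is homotopy equivalent to a bouquet of circles (see the discussion after Example~\ref{exam:3.1}), $F$ is a free group. Proposition~\ref{prop:4.1} supplies an epimorphism $\psi\colon A\times F\to\pi_1(M)$ whose kernel $K:=\ker\psi$ is contained in the first factor $A$, and for which $q_*\circ\psi$ is the projection onto $F$.

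First I would identify $\pi_1(M)$ as a direct product. Because $A\times F$ is a genuine direct product with $A$ abelian, the factor $A\times\{1\}$ is central in $A\times F$; in particular $K\subset A$ is a central, hence normal, subgroup of $A\times F$. The first isomorphism theorem gives $\pi_1(M)\cong (A\times F)/K$, and since $K$ lies inside the first factor this quotient splits as
\[
\pi_1(M)\cong (A/K)\times F.
\]
As $A$ is cyclic, the quotient $A/K$ is again cyclic (finite, infinite, or trivial), while $F$ remains free. This already exhibits $\pi_1(M)$ as a product of a cyclic group and a free group.

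It then remains to see that, when $M$ is not simply connected, the free factor $F$ is non-trivial, and here I would argue by contradiction. Suppose $F=\pi_1(M/T)$ were trivial. Then the bouquet $M/T$ is contractible, so the graph $\Gamma(\cP,\cF)$ is a tree and the origami template $(\cP,\cF)$ is acyclic; consequently $M$ has a fixed point. But a fixed point forces $\Delta(\cP,\cF)$ to have an $n$-dimensional cone, whence $N_\Delta=N$ and $A=N/N_\Delta$ is trivial. Then $A/K$ is trivial as well, and the displayed isomorphism yields $\pi_1(M)\cong\{1\}$, contradicting non-simple-connectivity. Hence $F\neq\{1\}$ is a non-trivial free group, and $\pi_1(M)\cong (A/K)\times F$ is exactly the asserted product.

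The only point needing genuine care is the splitting $(A\times F)/K\cong (A/K)\times F$, which hinges on $K$ lying inside the central factor $A$; this is precisely the assertion $\ker\psi\subset N/N_\Delta$ in Proposition~\ref{prop:4.1}, so no real obstacle arises. (As a consistency check, in the fixed-point case one may instead invoke Proposition~\ref{prop:3.1} directly, which gives $\pi_1(M)\cong\pi_1(M/T)=F$ free, matching the degenerate case $A=0$ above.)
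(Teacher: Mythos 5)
Your proof is correct and follows essentially the same route as the paper: the splitting $\pi_1(M)\cong (A/K)\times F$ is read off from Proposition~\ref{prop:4.1} exactly as intended, and the non-triviality of the free factor is reduced to the acyclicity of the template. The only (harmless) difference is in the degenerate case: the paper notes that a non-trivial finite cyclic $\pi_1(M)$ would force $\pi_1(M/T)=1$ and then invokes Proposition~\ref{prop:3.1}, whereas you argue via the existence of a fixed point and $N_\Delta=N$; both hinge on the same fact that an acyclic template forces simple connectivity.
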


\begin{proof}
The corollary almost follows from Proposition~\ref{prop:4.1}.  The only thing we have to check is that a non-trivial finite cyclic group does not occur as $\pi_1(M)$.  If it does, then $M/T$ must be simply connected because $q_*\colon \pi_1(M)\to \pi_1(M/T)$ is an epimorphism and $\pi_1(M/T)$ is a free group.  Therefore the origami template associated to $M$ is acyclic and hence $M$ is simply connected by Proposition~\ref{prop:3.1}, a contradiction.
\end{proof}

For an oriented origami template $(\cP,\cF)$, we say that a facet of a Delzant polytope in $\cP$ is \emph{non-folded} if it does not touch any facet in $\cF$ and denote its neighborhood in $P$ by $\nbd(F)$.  Let $(\cP',\cF')$ be another oriented origami template of the same dimension as $(\cP,\cF)$ and assume that there are non-folded facets $F$ in $P\in \cP$ and $F'$ in $P'\in\cP'$ such that
\begin{equation} \label{eq:4.1}
\nbd(F)=\nbd(F'), \quad\text{$P$ and $P'$ have opposite orientations}.
\end{equation}
Then we define a new oriented origami template $(\cP,\cF)\dia_{F,F'}(\cP',\cF')$ by
\[
(\cP,\cF)\dia_{F,F'}(\cP',\cF'):=(\cP\sqcup \cP', \cF\sqcup \cF'\sqcup \{F,F'\})
\]
Clearly from the definition we have
\[
\Delta((\cP,\cF)\dia_{F,F'}(\cP',\cF'))=\Delta(\cP,\cF)\dia_{L,L'}\Delta(\cP',\cF')
\]
where $L$ and $L'$ are the edges corresponding to $F$ and $F'$.

We shall interpret the operation above in terms of oriented toric origami manifolds.
Let $M$ and $M'$ be toric origami manifolds whose oriented origami templates, denoted $(\cP,\cF)$ and $(\cP',\cF')$ respectively, satisfy \eqref{eq:4.1}. We denote by $M\dia_{F,F'} M'$ the oriented toric origami manifold corresponding to $(\cP,\cF)\dia_{F,F'}(\cP',\cF')$. Topologically, $M\dia_{F,F'} M'$ can be obtained from $M$ and $M'$ as follows.  Let $\tilde F$ and $\tilde F'$ be the characteristic submanifold of $M$ and $M'$ corresponding to $F$ and $F'$ respectively.  Assumption \eqref{eq:4.1} means that $\tilde F$ and $\tilde F'$ have the same neighborhood with opposite orientations on the normal bundles of $\tilde F$ and $\tilde F'$.  Then one can perform the connected sum of $M$ and $M'$ equivariantly along $\tilde F$ and $\tilde F'$.  The resulting manifold is equivariantly diffeomorphic to $M\dia_{F,F'} M'$.

Let $M$ and $M'$ be omnioriented toric origami manifolds. Remember that if there are fixed points $p\in M$ and $p'\in M'$ where the tangential representations are isomorphic as complex representations but the ratios $\epsilon_{p}$ and $\epsilon_{p'}$ induced from the omniorientations on $M$ and $M'$ have opposite signs, then we can perform the equivariant connected sum $M\sharp_{p,p'}M'$, see Section~\ref{sec:2}.  This equivariant connected sum can be described in terms of a blow up and a diamond operation as follows.  In terms of the origami templates, we cut the vertices corresponding to the fixed points and glue the created facets.  In terms of the multi-fans, we make a stellar subdivision of the $n$-dimensional cones corresponding to the fixed points and perform the diamond operation at the created edges.

\section{$4$-dimensional case}\label{sec:4-dimensional case}

As is well-known, any symplectic toric manifold of dimension $4$ is diffeomorphic to $\C P^1\times \C P^1$ or $\C P^2\sharp q\overline{\C P^2}$ with some non-negative integer $q$, where $\overline{\C P^2}$ denotes $\C P^2$ with reversed orientation.
It is also well-known that a simply connected compact smooth 4-manifold with an effective smooth action of $T^2$ is diffeomorphic to $S^4$ or $p \C P^2\sharp q\overline{\C P^2}\sharp r(\C P^1\times \C P^1)$ with $p+q+r\ge 1$ (\cite{or-ra70}).
The purpose of this section is to prove the following, which tells us that the class of toric origami manifolds is much larger than that of symplectic toric manifolds in dimension $4$.

\begin{theorem}\label{theo:5.1}
Any simply connected compact smooth $4$-manifold $M$ with an effective smooth action of $T^2$ is equivariantly diffeomorphic to a toric origami manifold.
\end{theorem}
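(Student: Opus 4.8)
The plan is to combine the equivariant classification of Orlik--Raymond \cite{or-ra70} with the two surgery operations on toric origami manifolds isolated in Sections~\ref{sec:2} and~\ref{sec:4}, using $S^4$ and a folded copy of $\C P^1\times\C P^1$ as building blocks that carry fixed points of \emph{both} signs. By \cite{or-ra70} the manifold $M$ is equivariantly diffeomorphic to $S^4$ or to $p\C P^2\sharp q\overline{\C P^2}\sharp r(\C P^1\times\C P^1)$ with $p+q+r\ge1$, so it suffices to realize each model on the right as a toric origami manifold with its $T^2$-action. The case $S^4$ is Example~\ref{exam:3.1}, while $\C P^2$ and $\C P^1\times\C P^1$ are symplectic toric and hence toric origami; the substance is to produce the connected sums.

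First I would record that both operations I need preserve the class of toric origami manifolds. (i) One can equivariantly blow up at a fixed point by cutting the corresponding vertex of the template: the vertex is a non-singular corner because the polytopes are Delzant, so the truncated polytope is again Delzant and the fold is left untouched. By the blow-up rule of Section~\ref{sec:2} this replaces $M$ by $M\sharp\overline{\C P^2}$ if the fixed point has sign $\epsilon_p=+1$ and by $M\sharp\C P^2$ if $\epsilon_p=-1$, and in both cases the subdivided cones inherit that same sign, so fixed points of the opposite sign are preserved. (ii) If $p\in M$ and $p'\in M'$ have isomorphic complex tangential representations and opposite signs $\epsilon_p=-\epsilon_{p'}$, then the equivariant connected sum $M\sharp_{p,p'}M'$ is again toric origami, since by the discussion at the end of Section~\ref{sec:4} it is obtained from $M$ and $M'$ by a blow up followed by a diamond operation on the associated oriented origami templates.

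The crucial feature of $S^4$ is that it supplies both signs: by Example~\ref{ex:S4} its multi-fan has a single two-dimensional cone of weight $(w^+,w^-)=(1,1)$, so its two fixed points have $\epsilon=+1$ and $\epsilon=-1$. Blowing it up $q$ times at $+$ points and $p$ times at $-$ points therefore realizes $X:=p\C P^2\sharp q\overline{\C P^2}$ as a toric origami manifold for all $p,q\ge0$, and by (i) both signs survive at every stage, so the order of the blow-ups is immaterial. To incorporate the factors $\C P^1\times\C P^1$ I would use the seed $Y:=S^2_{\mathrm{f}}\times\C P^1$, where $S^2_{\mathrm{f}}$ is the folded two-sphere of Example~\ref{exam:3.1} with $n=1$. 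Being the product of a toric origami manifold and a symplectic toric manifold, $Y$ is toric origami; it is diffeomorphic to $\C P^1\times\C P^1$, and the $\pm$ poles of $S^2_{\mathrm{f}}$ endow $Y$ with fixed points of each sign. Attaching $r$ copies of $Y$ to $X$ in a chain via the equivariant connected sum (ii)---each new copy contributing its own fixed points of both signs, so the supply is never exhausted---yields a toric origami manifold diffeomorphic to $p\C P^2\sharp q\overline{\C P^2}\sharp r(\C P^1\times\C P^1)$.

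I expect the main obstacle to be the hypothesis in (ii) that the glued fixed points carry isomorphic complex tangential representations, together with the upgrade from a smooth to an \emph{equivariant} diffeomorphism onto the Orlik--Raymond model. At a fixed point of a four-dimensional toric origami manifold the tangential representation is $\C_u\oplus\C_w$, where $u$ and $w$ are the primitive edge vectors of the relevant cone, so matching two fixed points means aligning these weight pairs; I would arrange this by choosing which fixed points to glue and by pre-composing the action on the seed $Y$ with an automorphism of $T^2$ (a $\GL(2,\Z)$-change of the lattice $N=H_2(BT;\Z)$), which leaves the underlying manifold unchanged. The remaining, and most delicate, task is to follow the weighted orbit-space invariants of Orlik--Raymond through each blow up and diamond operation and check that the manifold assembled above is equivariantly diffeomorphic to the prescribed model rather than merely diffeomorphic to it.
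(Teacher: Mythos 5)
There is a genuine gap at the very first step: you reduce to realizing ``$S^4$ or $p\C P^2\sharp q\overline{\C P^2}\sharp r(\C P^1\times\C P^1)$ with its $T^2$-action,'' but the Orlik--Raymond statement you invoke is only a \emph{diffeomorphism} classification, while the theorem asserts an \emph{equivariant} diffeomorphism. Each of these smooth manifolds carries infinitely many pairwise non-equivariantly-diffeomorphic effective $T^2$-actions: by Theorem~\ref{theo:5.2} the equivariant type is determined by the unimodular sequence $v_1,\dots,v_d$ (up to sign), and, for example, the Hirzebruch surfaces $\Sigma_0$ and $\Sigma_2$ are both diffeomorphic to $\C P^1\times\C P^1$ but have inequivalent sequences and different isotropy data. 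Your construction (blow up $S^4$ at $\pm$ fixed points, then chain on copies of $Y=S^2_{\mathrm f}\times\C P^1$, which is equivariantly $\Sigma_0$) produces only the particular unimodular sequences reachable by those moves, and you give no argument that every unimodular sequence arises this way --- indeed $\Sigma_2$ with its toric action is already not of that form, so the case $p=q=0$, $r=1$ of your own list is not covered by your construction. You do flag ``checking the manifold is equivariantly diffeomorphic to the prescribed model'' as the delicate remaining task, but this is not a verification to be deferred; it is the entire content of the theorem, since merely exhibiting \emph{some} toric origami structure on each diffeomorphism type is strictly weaker than the statement.

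There are two ways to close the gap, and the paper comments on both. The route closest to yours uses the stronger, \emph{equivariant} decomposition theorem of Orlik--Raymond (quoted in the remark after Proposition~\ref{prop:5.1}): every such $M$ other than $S^4$ is an equivariant connected sum of copies of $\C P^2$, $\overline{\C P^2}$ and \emph{arbitrary Hirzebruch surfaces} with their natural actions; since all of these are symplectic toric and equivariant connected sums preserve the toric origami class, the theorem follows --- but note the building blocks must include all Hirzebruch surfaces, not just $\C P^1\times\C P^1$, and one must still check that at each gluing a pair of fixed points with isomorphic tangential representations and opposite signs $\epsilon_p$ is available. The paper instead works directly with the invariant: it proves (Proposition~\ref{prop:5.1}) by induction on $d$ that the multi-fan of \emph{every} unimodular sequence is associated to an origami template, after adjusting signs of the vectors, using the shortening lemma $\epsilon_{j-1}v_{j-1}+\epsilon_jv_{j+1}+a_jv_j=0$ from \cite{hi-ma12} together with blow-ups and diamond operations on fans of $\C P^2$ and Hirzebruch surfaces. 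That argument proves the theorem for every equivariant type at once and, as the paper notes, reproves the decomposition theorem rather than relying on it. Your surgery toolkit (i)--(ii) is sound and is exactly the machinery the paper uses; what is missing is an induction or decomposition that exhausts all unimodular sequences.
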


It is known (and not difficult to see) that the action of $T^2$ on $M$ in the theorem above is locally standard and the orbit space $M/T^2$ is homeomorphic to a $2$-disk (\cite{or-ra70}).  We can number the characteristic submanifolds $M_i$ $(i=1,\dots,d)$ of $M$ in such a way that the intersection of $M_i$ and $M_{i+1}$ is nonempty, where $M_{d+1}=M_1$.  In fact, the intersection consists of two fixed points when $d=2$ and one fixed point when $d\ge 3$.

We choose an omniorientation on $M$.  Then we obtain the multi-fan $\Delta(M)$ of $M$.  The primitive edge vector $v_i$ corresponding to $M_i$ is an element of $H_2(BT^2)$ and we identify $H_2(BT^2)$ with $\Z^2$. Each pair $(v_i,v_{i+1})$ for $i=1,\dots,d$, where $v_{d+1}=v_1$, is a basis of $\Z^2$, so $\det(v_i,v_{i+1})$ is $\pm 1$.  Such a sequence $v_1,\dots,v_d$ is called \emph{unimodular} in \cite{hi-ma12}.  For each $i=1,\dots,d$, we form a two-dimensional cone $\angle v_iv_{i+1}$ spanned by $v_i$ and $v_{i+1}$ and assign $\det(v_i,v_{i+1})$ to the cone as the weight.  This produces a non-singular complete multi-fan and one sees that it agrees with $\Delta(M)$ if necessary by reversing the orientation on $M$.  Therefore, the unimodular sequence is essentially equivalent to the multi-fan.

Remember that if we reverse the orientation on the characteristic submanifold $M_i$, then $v_i$ turns into $-v_i$ but the other vectors remain fixed.  Therefore, the unimodular sequence $v_1,\dots,v_d$ is associated to $M$ only up to sign if we do not specify the omniorientation on $M$.  The unimodular sequence (up to sign) is important because of the following.

\begin{theorem}[\cite{or-ra70}] \label{theo:5.2}
The unimodular sequence (up to sign) associated to $M$ in Theorem~\ref{theo:5.1} determines the equivariant diffeomorphism type of $M$.
\end{theorem}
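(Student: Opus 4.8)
The plan is to reconstruct $M$ directly from its orbit data and to identify that data with the unimodular sequence up to sign, so that two manifolds sharing the same sequence are forced to be equivariantly diffeomorphic. As recalled just before the statement, the $T^2$-action is locally standard and the orbit space $Q:=M/T^2$ is a $2$-disk; its boundary is a circle subdivided into $d$ edges $e_1,\dots,e_d$ meeting in $d$ vertices, where $e_i$ is the image of the characteristic submanifold $M_i$ and the vertex between $e_i$ and $e_{i+1}$ is the image of the fixed point in $M_i\cap M_{i+1}$. Over the relative interior of $e_i$ the isotropy group is the circle $S_i=v_i(S^1)$, while at each vertex the isotropy is all of $T^2$, the two adjacent circles generating it because $\det(v_i,v_{i+1})=\pm1$. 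Thus the assignment $e_i\mapsto S_i$ is a \emph{characteristic function} on $Q$; since $S_i=v_i(S^1)=(-v_i)(S^1)$, this function depends only on the sequence $v_1,\dots,v_d$ up to sign, so it is exactly the datum in the theorem.

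Next I would set up the canonical model in the sense of the Davis--Januszkiewicz construction \cite{da-ja91}. Form $M(Q,\lambda):=(Q\times T^2)/\!\sim$, where $(x,t)\sim(x,t')$ whenever $t^{-1}t'$ lies in the subtorus generated by the $S_i$ for those edges $e_i$ containing $x$; concretely, one collapses the full torus $T^2$ over each vertex, the circle $S_i$ over the relative interior of $e_i$, and nothing over the interior $Q^\circ$. This is a locally standard $T^2$-manifold whose orbit space and characteristic function are precisely $(Q,\lambda)$, and it depends only on the combinatorics of $Q$ (a disk with $d$ edges) together with $\lambda$.

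The heart of the argument is to produce an equivariant diffeomorphism $M\cong M(Q,\lambda)$. Let $M^\circ=q^{-1}(Q^\circ)$ be the free part; since $Q^\circ$ is a contractible open disk, the principal $T^2$-bundle $M^\circ\to Q^\circ$ is trivial, and I would fix a trivialization $M^\circ\cong Q^\circ\times T^2$ agreeing with the one chosen for the canonical model. Over a collar of each open edge the slice theorem provides an equivariant model of the form $\C\times(T^2/S_i)\times I$, and over a neighborhood of each vertex the slice theorem at the fixed point yields the faithful $T^2$-representation on $\C^2$ determined by the basis $(v_i,v_{i+1})$. In every case the local model is pinned down up to equivariant diffeomorphism by the circles $S_i$ alone, hence matches the corresponding local model of $M(Q,\lambda)$. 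Patching these equivariant identifications over $\partial Q$ to the fixed trivialization over $Q^\circ$ yields the desired diffeomorphism, and then $M$ depends only on $(Q,\lambda)$, i.e. on the unimodular sequence up to sign.

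The main obstacle is this final patching step: one must check that the local equivariant identifications near the edges and vertices glue consistently to the global trivialization over the free part, leaving no residual twisting. This is exactly where contractibility of the disk $Q$ is decisive. The set of locally standard $T^2$-manifolds over a fixed $(Q,\lambda)$ is a torsor under principal $T^2$-bundles over $Q$, classified by $H^2(Q;\Z^2)$ with coefficients $\Z^2=\pi_1(T^2)$; for $Q$ a disk this group vanishes, so the twisting is trivial and $M$ is the canonical model. (Equivalently, the simple connectivity of $M$, which by Lemma~\ref{lemm:2.1} matches the tree/disk structure of the orbit space, rules out any nontrivial monodromy.) Once this vanishing is in place the identifications extend, the reconstruction is complete, and the equivariant diffeomorphism type is determined by the unimodular sequence up to sign.
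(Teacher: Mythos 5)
The paper does not prove this statement at all: Theorem~\ref{theo:5.2} is quoted from Orlik--Raymond \cite{or-ra70} as a known classification result, so there is no in-paper argument to compare against. Your reconstruction is correct in outline and follows the standard modern route: encode the action in the pair $(Q,\lambda)$ consisting of the disk orbit space and the characteristic function $e_i\mapsto S_i=v_i(S^1)$ (which indeed sees $v_i$ only up to sign), build the canonical model $(Q\times T^2)/\!\sim$ in the style of Davis--Januszkiewicz \cite{da-ja91}, and show that the obstruction to identifying $M$ with the canonical model vanishes because $H^2(Q;\Z^2)=0$ for $Q$ a disk. This is essentially a repackaging of Orlik--Raymond's original cross-section argument, and it has the advantage of generalizing to higher-dimensional locally standard actions. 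Two caveats are worth flagging. First, the step you correctly identify as the heart of the matter --- that locally standard $T^2$-manifolds over a fixed $(Q,\lambda)$ form a torsor under $H^2(Q;\Z^2)$, equivalently that a trivialization of the free part extends across the singular strata --- is itself a nontrivial theorem (it is essentially the content of the Orlik--Raymond classification in dimension $4$, or of the general classification of locally standard actions), so as written you are invoking a result of comparable depth to the one being proved rather than establishing it; the parenthetical appeal to simple connectivity of $M$ does not substitute for this, since the relevant obstruction lives over $Q$, not $M$. Second, some routine care is needed to give the canonical model a smooth structure (smoothing of corners) so that the conclusion is an equivariant \emph{diffeomorphism} rather than homeomorphism. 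Neither point is a conceptual error, but both would need to be addressed or properly cited for a complete proof.
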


Therefore, in order to prove Theorem~\ref{theo:5.1}, it suffices to prove the following.

\begin{proposition} \label{prop:5.1}
The multi-fan formed from any unimodular sequence in $\Z^2$ is associated to an origami template if we change signs of the vectors appropriately.
    \end{proposition}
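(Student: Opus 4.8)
The plan is to work entirely on the multi-fan side and to assemble $\Delta(\cP,\cF)$ out of single-polygon (i.e.\ symplectic toric) pieces by repeated diamond operations, using the identity $\Delta((\cP,\cF)\dia_{F,F'}(\cP',\cF'))=\Delta(\cP,\cF)\dia_{L,L'}\Delta(\cP',\cF')$ established above. First I would record the combinatorics of the sign data. Writing $\epsilon_i=\det(v_i,v_{i+1})\in\{\pm1\}$, the cone $\angle v_iv_{i+1}$ carries weight $(1,0)$ when $\epsilon_i=+1$ and weight $(0,1)$ when $\epsilon_i=-1$, and replacing $v_i$ by $-v_i$ flips exactly the two signs $\epsilon_{i-1}$ and $\epsilon_i$ while leaving all other data unchanged. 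Thus the permitted sign changes act on the cyclic word $(\epsilon_1,\dots,\epsilon_d)$ by flipping adjacent pairs, so I can move and cancel the entries equal to $-1$; I would use this freedom to put the word into a normal form in which each ``reflex'' index $i$ (with $\epsilon_i=-1$) is flanked by two cones of weight $(1,0)$.

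The engine of the construction is that a single isolated reflex cone can be split off by a diamond with a copy of $\overline{\C P^2}$. At such an index $i$ the vectors $v_i$, $v_{i+1}$ and $w:=-(v_i+v_{i+1})$ span a non-singular triangle, and the negatively oriented Delzant triangle on them is a multi-fan whose only cone surviving the deletion of $\nbd(w)$ is $\angle v_iv_{i+1}$ with weight $(0,1)$. If $\Delta''$ denotes the multi-fan obtained from $\Delta$ by replacing the reflex cone $\angle v_iv_{i+1}$ with the two weight-$(1,0)$ cones $\angle v_iw$ and $\angle wv_{i+1}$, then $\nbd(w)$ in $\Delta''$ and in the triangle agree as sets but carry opposite weights, so the hypothesis of the diamond operation holds and $\Delta''\dia_{w,w}(\text{triangle})=\Delta$. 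This decreases the number of weight-$(0,1)$ cones by one at the cost of one extra edge, reducing the problem inductively to the case in which every cone has weight $(1,0)$. In that case the multi-fan is an ordinary non-singular complete fan in $\R^2$, hence the normal fan of a Delzant polygon $P$, and $(\{P\},\emptyset)$ is the required template (a symplectic toric surface, namely $\C P^2\sharp q\overline{\C P^2}$ or a Hirzebruch surface).

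The step I expect to be the main obstacle is guaranteeing that the induction actually terminates at a \emph{single} Delzant polygon, i.e.\ that the residual all-weight-$(1,0)$ multi-fan winds around the origin exactly once. This is delicate precisely because the sign changes alter not only the positions of the reflex cones but also the winding number of the sequence: an all-weight-$(1,0)$ unimodular cycle can wind several times (for instance the doubled square $(1,0),(0,1),(-1,0),(0,-1),(1,0),(0,1),(-1,0),(0,-1)$), and such a multi-fan is not the normal fan of any polygon. Since each peeling operation above raises the winding number by one, the signs must be chosen so that removing the $k$ reflex cones starts from winding $1-k$; equivalently one must realize, by sign changes alone, the bookkeeping relation between the winding number and the number of folds. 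I would resolve this by combining the diamond reductions with blow-downs — deleting an edge $v_i$ with $v_{i-1}+v_{i+1}=v_i$ lowers $d$ and is inverted on the template side by cutting the corresponding corner — and by treating the cases $d\le 3$ (which produce $S^4$, $\C P^2$ and the Hirzebruch surfaces) directly, so that after suitable sign changes every unimodular sequence reduces to one of these explicitly realizable models.
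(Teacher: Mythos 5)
Your first two steps are sound: the observation that changing the sign of $v_i$ flips exactly the adjacent pair $(\epsilon_{i-1},\epsilon_i)$ in the cyclic sign word, and the peeling of an isolated weight-$(0,1)$ cone by a diamond with the negatively oriented triangle on $v_i$, $v_{i+1}$, $w=-(v_i+v_{i+1})$, are both correct (indeed $\det(v_i,w)=-\det(v_i,v_{i+1})=+1$ at a reflex index, so the two new cones carry weight $(1,0)$ and the diamond hypothesis holds, modulo checking that $w$ does not coincide with another edge of the sequence and that the corresponding facet of the template can be kept non-folded). But the argument does not close. As you yourself point out, the reduction terminates in an all-weight-$(1,0)$ unimodular cycle, and such a cycle is the normal fan of a Delzant polygon only when its degree (winding number) equals $1$; each peeling raises the degree by $1$, and the permitted sign changes alter the degree and the number of reflex cones simultaneously, so you would need to prove that the signs can always be normalized so that $(\text{degree})+(\text{number of reflex cones})=1$. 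No argument is given for this, and it is essentially the whole content of the proposition --- the doubled square you exhibit is a counterexample to the naive version of your terminal case.

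The patch you sketch (``combine with blow-downs, deleting an edge with $v_{i-1}+v_{i+1}=v_i$, and reduce to the models with $d\le 3$'') is exactly where the real work lies, and as stated it fails: for a general unimodular sequence no index with $v_{i-1}+v_{i+1}=v_i$ need exist (already for the square one has $v_{i-1}+v_{i+1}=0$), and the relation one can actually guarantee has the form $\epsilon_{j-1}v_{j-1}+\epsilon_j v_{j+1}+a_jv_j=0$ with $a_j\in\{0,\pm1\}$. The paper's proof is an induction on $d$ built precisely on the nontrivial existence of such a $j$ (Lemma 1.3 of \cite{hi-ma12}, taking $v_j$ of maximal Euclidean norm), with separate treatments of the cases $a_j=\pm1$ and $a_j=0$, explicit realizations for $d=2,3$, and a reconstruction of $\Delta$ from the shortened sequence by one blow-up followed by one diamond with a short, explicitly realizable multi-fan. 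Your proposal correctly isolates the obstacle but supplies neither this existence lemma nor the case analysis needed to run the induction, so the proof is incomplete.
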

    \begin{proof}
 We prove the proposition by induction on the length $d$ of a unimodular sequence $v_1,\dots,v_d$.  Let $\Delta$ be the multi-fan formed from the unimodular sequence.  We may assume that $v_1=(1,0)$ and $v_2=(0,1)$ through a modular transformation on $\Z^2$.

When $d=2$, $\Delta$ is associated to an origami template as observed in Section~\ref{sec:4} (see Figure~\ref{fig:template and multi-fan of S4}).

When $d=3$, we have $v_3=(\pm 1,\pm 1)$.  When $v_3=(-1,-1)$, $\Delta$ is the fan of $\C P^2$, that is, the normal fan of a right-angled isosceles triangle.  In the remaining three cases, $\Delta$ can be obtained from the fan of $\C P^2$ and the fan of a Hirzebruch surface (that is the normal fan of a trapezoid) through diamond operation, see Figure~\ref{fig:d=3}, where $+$ or $-$ on a cone means that the weight $(w^+,w^-)$ on the cone is $(1,0)$ or $(0,1)$ respectively.

\begin{figure}[h]
\psset{unit=.5cm}
\begin{center}
\begin{pspicture}(-2,-5.5)(17.5,10)
    \pspolygon[fillstyle=hlines,hatchangle=135,linecolor=white](0,-3)(2,-3)(0,-1)
    \pspolygon[fillstyle=hlines,hatchangle=90,linecolor=white](0,-3)(2,-3)(2,-5)
    \pspolygon[fillstyle=hlines,hatchangle=45,hatchcolor=green,linecolor=white](0,-3)(2,-5)(2,-3)(0,-1)
    \psline{->}(0,-3)(2,-3)
    \rput(2.3,-3.5){$v_1$}
    \psline{->}(0,-3)(0,-1)
    \rput(0.3,-0.7){$v_2$}
    \psline{->}(0,-3)(2,-5.2)
    \rput(2.3,-5){$v_3$}
    \rput(4.5,-3){$=$}
    \pspolygon[fillstyle=hlines,hatchangle=135,linecolor=white](7,-3)(9,-3)(7,-1)
    \pspolygon[fillstyle=hlines,hatchangle=45,linecolor=white](7,-3)(7,-1)(5,-3)
    \pspolygon[fillstyle=hlines,hatchangle=150,linecolor=white](7,-3)(5,-3)(9,-5)
    \pspolygon[fillstyle=hlines,hatchangle=90,linecolor=white](7,-3)(9,-5)(9,-3)
    \psline{->}(7,-3)(9,-3)
    \rput(9.3,-3.5){$v_1$}
    \psline{->}(7,-3)(7,-1)
    \rput(7.3,-0.7){$v_2$}
    \psline[linecolor=blue]{->}(7,-3)(5,-3)
    \rput(5,-3.5){$\blue{\tilde{v}}$}
    \psline{->}(7,-3)(9,-5)
    \rput(9.3,-5.2){$v_3$}
    \rput(8,-2){$\red{+}$}
    \rput(6,-2){$\red{+}$}
    \rput(6.5,-4){$\red{+}$}
    \rput(8,-3.5){$\red{+}$}
    \rput(10,-2.5){$\dia_{L,L'}$}
    \pspolygon[fillstyle=hlines,hatchangle=45,hatchcolor=green,linecolor=white](13,-3)(15,-5)(13,-1)
    \pspolygon[fillstyle=hlines,hatchangle=135,linecolor=white](13,-3)(13,-1)(11,-3)
    \pspolygon[fillstyle=hlines,hatchangle=150,linecolor=white](13,-3)(11,-3)(15,-5)
    \psline{->}(13,-3)(13,-1)
    \rput(13.3,-0.7){$v_2$}
    \psline[linecolor=blue]{->}(13,-3)(11,-3)
    \rput(11,-3.5){$\blue{\tilde{v}}$}
    \psline{->}(13,-3)(15,-5)
    \rput(15.3,-5.2){$v_3$}
    \rput(14,-3){$\red{-}$}
    \rput(12,-2){$\red{-}$}
    \rput(12.5,-4){$\red{-}$}
    \pspolygon[fillstyle=hlines,hatchangle=135,linecolor=white](0,2)(2,2)(2,4)
    \pspolygon[fillstyle=hlines,hatchangle=180,linecolor=white](0,2)(2,4)(0,4)
    \pspolygon[fillstyle=hlines,hatchangle=45,hatchcolor=green,linecolor=white](0,2)(0,4)(2,4)(2,2)
    \psline{->}(0,2)(2,2)
    \rput(2,1.5){$v_1$}
    \psline{->}(0,2)(0,4)
    \rput(0.3,4.2){$v_2$}
    \psline{->}(0,2)(2,4)
    \rput(2.5,4.2){$v_3$}
    \rput(4.5,2){$=$}
    \pspolygon[fillstyle=hlines,hatchangle=45,hatchcolor=green,linecolor=white](7,2)(7,4)(9,2)
    \pspolygon[fillstyle=hlines,hatchangle=30,linecolor=white](7,2)(5,0)(9,2)
    \pspolygon[fillstyle=hlines,hatchangle=70,linecolor=white](7,2)(5,0)(7,4)
    \psline{->}(7,2)(9,2)
    \rput(9,1.5){$v_1$}
    \psline{->}(7,2)(7,4)
    \rput(7.5,3.8){$v_2$}
    \psline[linecolor=blue]{->}(7,2)(5,0)
    \rput(5.5,0){$\blue{\tilde{v}}$}
    \rput(8,3){$\red{+}$}
    \rput(7,1){$\red{+}$}
    \rput(6,2){$\red{+}$}
    \rput(10.5,2){$\dia_{L,L'}$}
    \pspolygon[fillstyle=hlines,hatchangle=135,linecolor=white](13,2)(15,2)(15,4)
    \pspolygon[fillstyle=hlines,hatchangle=30,linecolor=white](13,2)(11,0)(15,2)
    \pspolygon[fillstyle=hlines,hatchangle=180,linecolor=white](13,2)(15,4)(13,4)
    \pspolygon[fillstyle=hlines,hatchangle=70,linecolor=white](13,2)(11,0)(13,4)
    \psline{->}(13,2)(15,2)
    \rput(15,1.5){$v_1$}
    \psline{->}(13,2)(15,4)
    \rput(15.5,4.2){$v_3$}
    \psline[linecolor=blue]{->}(13,2)(11,0)
    \rput(11.5,0){$\blue{\tilde{v}}$}
    \psline{->}(13,2)(13,4)
    \rput(13.5,3.8){$v_2$}
    \rput(15,3){$\red{-}$}
    \rput(14,3.5){$\red{-}$}
    \rput(12,2){$\red{-}$}
    \rput(13,1){$\red{-}$}
    \pspolygon[fillstyle=hlines,hatchangle=65,hatchcolor=green,linecolor=white](0,7)(2,7)(0,9)(-2,9)
    \pspolygon[fillstyle=hlines,hatchangle=30,linecolor=white](0,7)(0,9)(-2,9)
    \pspolygon[fillstyle=hlines,hatchangle=135,linecolor=white](0,7)(2,7)(0,9)
    \psline{->}(0,7)(2,7)
    \rput(2,6.5){$v_1$}
    \psline{->}(0,7)(0,9)
    \rput(0.3,9.2){$v_2$}
    \psline{->}(0,7)(-2,9)
    \rput(-2.5,9.2){$v_3$}
    \rput(4.3,7.5){$=$}
    \pspolygon[fillstyle=hlines,hatchangle=135,linecolor=white](7,7)(9,7)(7,9)
    \pspolygon[fillstyle=hlines,hatchangle=30,linecolor=white](7,7)(7,9)(5,9)
    \pspolygon[fillstyle=hlines,hatchangle=115,linecolor=white](7,7)(5,9)(7,5)
    \pspolygon[fillstyle=hlines,hatchangle=45,linecolor=white](7,7)(7,5)(9,7)
    \psline{->}(7,7)(9,7)
    \rput(9,6.5){$v_1$}
    \psline{->}(7,7)(7,9)
    \rput(7.3,9.2){$v_2$}
    \psline{->}(7,7)(5,9)
    \rput(4.5,8.7){$v_3$}
    \psline[linecolor=blue]{->}(7,7)(7,5)
    \rput(6.5,5.5){$\blue{\tilde{v}}$}
    \rput(8,8){$\red{+}$}
    \rput(8,6){$\red{+}$}
    \rput(6.5,8.5){$\red{+}$}
    \rput(6,7){$\red{+}$}
    \rput(10,7.5){$\dia_{L,L'}$}
    \pspolygon[fillstyle=hlines,hatchangle=45,linecolor=white](13,7)(13,5)(15,7)
    \pspolygon[fillstyle=hlines,hatchangle=115,linecolor=white](13,7)(13,5)(11,9)
    \pspolygon[fillstyle=hlines,hatchangle=65,hatchcolor=green,linecolor=white](13,7)(15,7)(11,9)
    \psline{->}(13,7)(15,7)
    \rput(15.5,7){$v_1$}
    \psline{->}(13,7)(11,9)
    \rput(11.3,9.2){$v_3$}
    \psline[linecolor=blue]{->}(13,7)(13,5)
    \rput(12.5,5.5){$\blue{\tilde{v}}$}
    \rput(13,8){$\red{-}$}
    \rput(14,6){$\red{-}$}
    \rput(12,7){$\red{-}$}
\end{pspicture}
\end{center}
\caption{}
\label{fig:d=3}
\end{figure}

Now we assume $d\ge 4$ and the proposition holds for unimodular sequences of length at most $d-1$.  As is shown in~\cite[Lemma 1.3]{hi-ma12}, there is $v_j$ $(1\le j\le d)$ which satisfies
        \[
        \epsilon_{j-1}v_{j-1}+\epsilon_j v_{j+1}+a_jv_j=0 \quad (\text{with $a_j=\pm 1$ or $0$}).
        \]
        In fact, $v_j$ is the vector whose Euclidean norm is maximal among $v_1,\ldots,v_d$. We distinguish two cases.

        \emph{The case where $a_j=\pm 1$}. We remove $v_{j}$ from the sequence $v_1,\dots,v_d$. The resulting sequence
\begin{equation} \label{eq:5.1}
v_1,\dots,v_{j-1}, v_{j+1},\dots, v_d
\end{equation}
is still unimodular, so we may assume that (if necessary by changing signs of some vectors in the original unimodular sequence) the multi-fan $\Delta_1$ formed from the sequence \eqref{eq:5.1} is associated to an origami template by induction assumption.  The multi-fan $\Delta_2$ obtained by blowing up $\Delta_1$ at the cone $\angle v_{j-1}v_{j+1}$ is also associated to an origami template because so is $\Delta_1$.

We may assume $j=2$ if necessary by changing the numbering of the vectors through a cyclic permutation.  If $v_2=v_1+v_3$ (that is, $v_3=(-1,1)$), then $\Delta=\Delta_2$.  Unless $v_2=v_1+v_3$, we have $v_3=(1,1), (1,-1)$ or $(-1,-1)$.  In each case we consider the multi-fan $\Delta_2'$ formed from a unimodular sequence $v_3,v_2,v_1, v_1+v_3$, namely $\Delta_2'$ is the multi-fan obtained by blowing up the multi-fan $\Delta_1'$ formed from a unimodular sequence $v_3,v_2,v_1$ at the cone $\angle v_1v_3$. Since $\Delta_1'$ is  associated to an origami template as observed above, so is
$\Delta_2'$.  Clearly $\Delta=\Delta_2\dia \Delta_2'$, where the diamond operation is performed at the edge spanned by $v_1+v_3$, so $\Delta$ is associated to an origami template because so are both $\Delta_2$ and $\Delta_2'$.

\emph{The case where $a_j=0$}.  In this case, we remove $v_{j-1}$ and $v_{j}$ from the sequence $v_1,\dots,v_d$. The resulting sequence
\begin{equation} \label{eq:5.2}
v_1,\dots,v_{j-2}, v_{j+1},\dots, v_d
\end{equation}
is still unimodular, so we may assume that (if necessary by changing signs of some vectors in the original unimodular sequence) the multi-fan $\Delta_1$ formed from the sequence \eqref{eq:5.2} is associated to an origami template by induction assumption.  The multi-fan $\Delta_2$ obtained by blowing up $\Delta_1$ at the cone $\angle v_{j-2}v_{j+1}$ is also associated to an origami template because so is $\Delta_1$.

We may assume $j=3$ if necessary by changing the numbering of the vectors through a cyclic permutation.  Then $v_2=\pm v_4$, but we may assume that $v_2=-v_4$ if necessary by changing the sign of $v_2$.  We consider the multi-fan $\Delta_2'$ formed from a unimodular sequence $v_4,v_3,v_2, v_1, v_1+v_4$.  Note that $v_3=(\pm 1,b)$ for some $b\in\Z$.

\medskip
\noindent
{\bf Claim.} $\Delta_2'$ is associated to an origami template.

\medskip
\noindent
\emph{Proof of the claim}. Since $\Delta'_2$ is obtained by blowing up the multi-fan $\Delta_1'$ formed from a unimodular sequence $v_4,v_3,v_2, v_1$ at the cone $\angle v_1v_4$, it suffices to check that $\Delta_1'$ is associated to an origami template.
When $v_3=(-1,b)$, $\Delta_1'$ is the fan corresponding to a Hirzebruch surface, so it is the normal fan of a trapezoid.  When $v_3=(1,b)$, we take $\tilde{v}=(-1,0)$. Then $v_1,v_2,\tilde{v},v_4$ (respectively, $v_3,v_4,\tilde{v},v_2$) is a unimodular sequence rotating around the origin once counterclockwise (respectively, clockwise). Let $\Delta'$ be the fan formed from the unimodular sequence $v_1,v_2,\tilde{v},v_4$ and $\Delta''$ the fan formed from the other unimodular sequence $v_3,v_4,\tilde{v},v_2$. Then $\Delta'_1=\Delta'\dia_{L',L''}\Delta''$, where $L'$ (respectively, $L''$) is the edge spanned by $\tilde{v}$ in $\Delta'$ (respectively, $\Delta''$), see Figure~\ref{fig:3}.  This proves the claim.

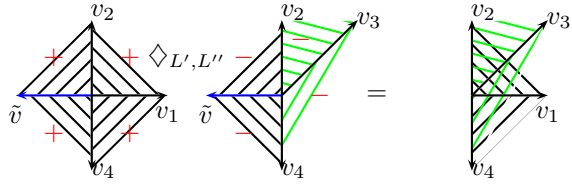
\begin{figure}[ht]
\psset{unit=.5cm}
\begin{center}
\begin{pspicture}(0,0)(14.5,4.5)
    \pspolygon[fillstyle=hlines,hatchangle=135,linecolor=white](2,2)(4,2)(2,4)
    \pspolygon[fillstyle=hlines,hatchangle=45,linecolor=white](2,2)(2,4)(0,2)
    \pspolygon[fillstyle=hlines,hatchangle=135,linecolor=white](2,2)(0,2)(2,0)
    \pspolygon[fillstyle=hlines,hatchangle=45,linecolor=white](2,2)(2,0)(4,2)
    \psline{->}(2,2)(4,2)
    \rput(4,1.5){$v_1$}
    \psline{->}(2,2)(2,4)
    \rput(2.3,4.2){$v_2$}
    \psline[linecolor=blue]{->}(2,2)(0,2)
    \rput(0,1.5){$\tilde{v}$}
    \psline{->}(2,2)(2,0)
    \rput(2.3,0){$v_4$}
    \rput(1,3){$\red{+}$}
    \rput(3,3){$\red{+}$}
    \rput(3,1){$\red{+}$}
    \rput(1,1){$\red{+}$}
    \rput(4.5,3){$\dia_{L',L''}$}
    \pspolygon[fillstyle=hlines,hatchangle=165,hatchcolor=green,linecolor=white](7,2)(9,4)(7,4)
    \pspolygon[fillstyle=hlines,hatchangle=45,linecolor=white](7,2)(7,4)(5,2)
    \pspolygon[fillstyle=hlines,hatchangle=135,linecolor=white](7,2)(5,2)(7,0)
    \pspolygon[fillstyle=hlines,hatchangle=60,hatchcolor=green,linecolor=white](7,2)(7,0)(9,4)
    \psline{->}(7,2)(9,4)
    \rput(9.3,4){$v_3$}
    \psline{->}(7,2)(7,4)
    \rput(7.3,4.2){$v_2$}
    \psline[linecolor=blue]{->}(7,2)(5,2)
    \rput(5,1.5){$\tilde{v}$}
    \psline{->}(7,2)(7,0)
    \rput(7.3,0){$v_4$}
    \rput(6,3){$\red{-}$}
    \rput(6,1){$\red{-}$}
    \rput(8,2){$\red{-}$}
    \rput(7.5,3.5){$\red{-}$}
    \rput(9.5,2){$=$}
    \pspolygon[fillstyle=hlines,hatchangle=135,linecolor=white](12,2)(14,2)(12,4)
    \pspolygon[fillstyle=hlines,hatchangle=45,linecolor=white](12,2)(12,0)(14,2)
    \pspolygon[fillstyle=hlines,hatchangle=165,hatchcolor=green,linecolor=white](12,2)(14,4)(12,4)
    \pspolygon[fillstyle=hlines,hatchangle=60,hatchcolor=green,linecolor=white](12,2)(12,0)(14,4)
    \psline{->}(12,2)(14,2)
    \rput(14,1.5){$v_1$}
    \psline{->}(12,2)(14,4)
    \rput(14.3,4){$v_3$}
    \psline{->}(12,2)(12,4)
    \rput(12.3,4.2){$v_2$}
    \psline{->}(12,2)(12,0)
    \rput(12.3,0){$v_4$}
\end{pspicture}
\end{center}
\caption{$\Delta'\dia_{L',L''}\Delta''$}
\label{fig:3}
\end{figure}

\medskip
Clearly $\Delta=\Delta_2\dia \Delta_2'$, where the diamond operation is performed at the edge spanned by $v_1+v_4$, so $\Delta$ is associated to an origami template because so are both $\Delta_2$ and $\Delta_2'$.  This completes the proof of the proposition.
\end{proof}

\begin{remark}
(1) The reader might expect that the multi-fan $\Delta$ formed from a unimodular sequence $v_1,\dots,v_d$ is always associated to an origami template without changing signs of the vectors.  This is the case when $d=2,3$ as observed in the proof of  Proposition~\ref{prop:5.1} but not the case when $d=4$.  For instance, one can see that the multi-fan $\Delta$ formed from a unimodular sequence $v_1=(1,0)$, $v_2=(0,1)$, $v_3=(1,b)$ $(b\in\Z)$, $v_4=(0,1)$ is associated to an origami template only when $b=\pm1$ or $\pm2$.

(2) Orlik-Raymond (\cite{or-ra70}) further prove that a simply connected compact smooth 4-manifold (except $S^4$) with an effective smooth action of $T^2$ decomposes into an equivariant connected sum of copies of $\C P^2$, $\overline{\C P^2}$ and Hirzebruch surfaces with natural $T^2$-actions.  Since an equivariant connected sum of toric origami manifolds is again a toric origami manifold, Theorem~\ref{theo:5.1} can be proved using the decomposition result and our proof in Proposition~\ref{prop:5.1} can be regarded as another proof to the decomposition result.
\end{remark}

A. Cannas da Silva \cite{cann10} shows that any orientable compact smooth (not necessarily simply connected) $4$-manifold admits a folded symplectic form.  However, the toric origami version of this statement is not true.  For example, the 4-dimensional torus with a free $T^2$-action does not admit a toric origami form because its orbit space is the $2$-dimensional torus and not homotopy equivalent to a bouquet of $S^1$.

\section{Products of toric origami manifolds} \label{sec:6}

Let $(M,\omega)$ be a toric origami manifold with nonempty fold $Z$.  If $(M',\omega')$ is a symplectic toric manifold, then the product $(M\times M',\omega\times\omega')$ is again a toric origami manifold (with fold $Z\times M'$).  However, if $(M',\omega')$ is an origami manifold with nonempty fold $Z'$, then $\omega\times\omega'$ is not an origami form on $M\times M'$ because $\omega\times \omega'$ does not have a maximal rank at a point in $Z\times Z'$.  So, it is natural to ask the following.

\medskip
\noindent
{\bf Problem.}  Let $M$ and $M'$ be toric origami manifolds.  If $M\times M'$ with the product action admits a toric origami form, then does either $M$ or $M'$ admit a toric symplectic form?

\begin{remark}
A toric origami form on $M\times M'$ is not necessarily the product of a toric origami form and a toric symplectic form.  For example, let $P$ be a right-angled trapezoid and let $F$ be the side of $P$ which does not touch the two right-angled corners.  Let $\bar{P}$ and $\bar F$ be copies of $P$ and $F$ respectively.  Then $\cP=\{P,\bar P\}$ and $\cF=\{\{F,\bar F\}\}$ form an origami template $(\cP,\cF)$ and the toric origami manifold associated to $(\cP,\cF)$ is equivariantly diffeomorphic to $S^2\times S^2$ with the standard product action of $T^2$.  But the toric origami form associated to $(\cP,\cF)$ is not a product because $P$ is not a product of two simple polytopes.
\end{remark}

In this section we make some observations on the problem above, all of which provide affirmative evidence to the problem.

\begin{lemma} \label{lemm:6.1}
Let $S^4$ be the standard $4$-sphere with the standard $T^2$-action.  Then $S^4\times S^4$ with the product $T^4$-action does not admit a toric origami form.
\end{lemma}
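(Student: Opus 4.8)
The plan is to analyze the toric origami form on $S^4 \times S^4$ through its associated origami template, leveraging the correspondence in Theorem~\ref{theo:3.1} together with the multi-fan machinery from Section~\ref{sec:4}. Suppose for contradiction that $S^4 \times S^4$ admits a toric origami form; let $(\cP, \cF)$ be the associated origami template. Since $S^4 \times S^4$ is simply connected, Proposition~\ref{prop:3.1} forces the template to be acyclic (hence $M$ has a fixed point), so the underlying graph $\Gamma(\cP,\cF)$ is a tree and $|(\cP,\cF)|$ is contractible, homeomorphic to $(S^4 \times S^4)/T^4$ as a manifold with corners. The first key step is therefore to pin down the combinatorial structure of this orbit space.

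The heart of the argument is a cohomological obstruction. Because the template is acyclic, Theorem~\ref{theo:3.2} gives $H^{odd}(S^4 \times S^4) = 0$, which is consistent. I would then examine the ring structure: by Theorem~\ref{theo:2.1}(2), $H^*(M)$ is generated by degree-two elements \emph{if and only if} $M/T$ is a homology polytope. But $H^*(S^4 \times S^4) = H^*(S^4) \otimes H^*(S^4)$ is generated in degree four, not degree two, so $(S^4 \times S^4)/T^4$ is \emph{not} a homology polytope even though it is face-acyclic. The plan is to extract a concrete contradiction from this failure. Each $S^4$ factor has the origami template of Example~\ref{exam:3.1}: two triangles glued along a hypotenuse, so its orbit space is a triangle whose "fold" edge is an interior-type wall. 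The second key step is to understand how folds in the product interact: a product of two folded forms is never an origami form (as noted in the introduction), so any origami template for $S^4 \times S^4$ cannot simply be the product template, and I must rule out \emph{all} possible templates, not just the obvious one.

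**The hard part will be** converting the abstract "not a homology polytope" statement into a rigorous exclusion of every conceivable origami template. My approach is to count characteristic submanifolds and analyze the multi-fan $\Delta(\cP,\cF)$ in $N_\R \cong \R^4$. Since $S^4 \times S^4$ is an $8$-dimensional torus manifold with a fixed point, its multi-fan is non-singular and complete by Theorem~\ref{theo:2.0}, and one computes the expected number of fixed points from the Euler characteristic: $\chi(S^4 \times S^4) = \chi(S^4)^2 = 4$. Each $S^4$ factor contributes exactly two fixed points (the north and south poles), matching the two weights $(1,1)$ on the single cone of $\Delta(S^4)$. I would track how these weights multiply: the fixed-point data of the product is governed by pairs of fixed points, and the key structural fact is that the orbit space of each $S^4$ is a triangle with one non-vertex-meeting fold, so that the intersection pattern of characteristic submanifolds in the product fails the connectedness condition of a homology polytope. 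The main obstacle is organizing the face lattice of $|(\cP,\cF)|$ carefully enough to show that \emph{some} pair of facets must have disconnected intersection, contradicting the requirement (implicit in Theorem~\ref{theo:2.1}(2) together with the known ring structure of $S^4 \times S^4$).

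Concretely, I would argue as follows. A toric origami form on $S^4 \times S^4$ with acyclic template yields, via the construction of Section~\ref{sec:4}, a non-singular complete multi-fan; but the product structure of the cohomology ring (generated in degree four, with the four-fold products $[S^4] \otimes 1$, $1 \otimes [S^4]$ of the two factors) is incompatible with being a homology polytope. The cleanest contradiction should come by showing that if the orbit space were face-acyclic with all face-intersections connected, then $H^*$ would be generated in degree two — forcing $H^4(S^4\times S^4)$ to be a product of $H^2$ classes, which it is not since $H^2(S^4 \times S^4) = 0$. Indeed $H^2(S^4 \times S^4) = 0$ is decisive: a homology polytope of dimension four with a degree-two-generated cohomology ring would need nonzero $H^2$ to generate the nonzero $H^4 = \Z^2$, so the ring simply cannot be generated in degree two. **I expect** the remaining work to be verifying that the failure of the homology-polytope condition cannot be evaded by a non-acyclic or non-coorientable template — but simple-connectivity rules those out through Proposition~\ref{prop:3.1}, closing the argument.
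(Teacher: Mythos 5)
Your central obstruction does not obstruct anything. The logical engine of your argument is: $H^*(S^4\times S^4)$ is not generated in degree two, hence by Theorem~\ref{theo:2.1}(2) the orbit space is not a homology polytope, and you hope to ``extract a concrete contradiction from this failure.'' But being a homology polytope is not a necessary condition for a space to be the orbit space of a toric origami manifold --- $S^4$ itself is the counterexample: it admits a toric origami form, its cohomology is generated in degree four, and its orbit space $|(\cP,\cF)|$ has two facets whose intersection is two points, hence disconnected. Theorem~\ref{theo:2.1}(2) is an equivalence between two conditions that simply both fail here, consistently; showing that ``some pair of facets must have disconnected intersection'' (your stated goal) rules out nothing, since origami templates are perfectly allowed to have that feature. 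So the proposal never actually reaches a contradiction, and no amount of care in ``organizing the face lattice'' will make this route close.

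The paper's proof finds the contradiction elsewhere, in a facet/vertex count that your write-up gestures at ($\chi(S^4\times S^4)=4$ fixed points) but never exploits. Since the template is acyclic (by Proposition~\ref{prop:3.1}, as you correctly note) and $|(\cP,\cF)|\cong (S^4\times S^4)/T^4$ has only \emph{four} facets (one per characteristic submanifold), a leaf polytope of the tree $\Gamma(\cP,\cF)$ is forced to be a $4$-simplex with exactly one folding facet; the next polytope in the tree is then forced to be combinatorially $\Delta^3\times\Delta^1$ with its opposite facet also folded, and so on, so the tree is a chain terminating in another $4$-simplex. All vertices of the intermediate prisms lie on folding facets, so $|(\cP,\cF)|$ has only the two non-folded simplex vertices --- but it must have four vertices because the $T^4$-action has four fixed points. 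That counting argument is the missing idea; if you want to salvage your draft, replace the homology-polytope discussion with it.
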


\begin{proof}
Suppose that $S^4\times S^4$ with the product action admits a toric origami form and let $(\cP,\cF)$ be the associated origami template.  Since $S^4\times S^4$ is simply connected, $(\cP,\cF)$ is acyclic (i.e., the graph $\Gamma(\cP,\cF)$ is a tree) by Proposition~\ref{prop:3.1}.  The polytope corresponding to a leaf in the tree $\Gamma(\cP,\cF)$ must have a vertex not contained in any folding facet.  On the other hand, the topological space $|(\cP,\cF)|$ is homeomorphic to the orbit space $(S^4\times S^4)/T^4$ as manifolds with corners as remarked after Example~\ref{exam:3.1}, and $S^4\times S^4$ with the $T^4$-action has only four characteristic submanifolds and the characteristic submanifolds correspond to the facets of the orbit space $(S^4\times S^4)/T^4$.  Therefore, $|(\cP,\cF)|$ has only four facets and hence a polytope corresponding to a leaf in $\Gamma(\cP,\cF)$ must be a $4$-simplex and one of the facets of the polytope must be a folding facet.

Let $P_1$ be a polytope in $\cP$ corresponding to a leaf of $\Gamma(\cP,\cF)$.  As noted above, $P_1$ is a $4$-simplex and has one folding facet.  Since the number of facets in $|(\cP,\cF)|$ is four and $P_1$ has already four facets except the folding facet, the polytope $P_2$ which shares the folding facet of $P_1$ must be combinatorially equivalent to the product of a $3$-simplex and a $1$-simplex.  Again since the number of facets in $|(\cP,\cF)|$ is four, the facet of $P_2$ which does not meet the folding facet of $P_1$ must also be a folding facet.  Repeating this argument shows that we finally reach another $4$-simplex and this means that the tree $|(\cP,\cF)|$ has only two leaves and there are only two vertices in $|(\cP,\cF)|$.  However, $|(\cP,\cF)|$ must have four vertices because the $T^4$-action on $S^4\times S^4$ has four fixed points.  This is a contradiction and the lemma is proven.
\end{proof}

\begin{theorem} \label{theo:6.1}
Let $S^{2n_i}$ $(n_i\ge 1)$ be the standard $2n_i$-sphere with the standard $T^{n_i}$-action for $i=1,\dots,k$.  Then $\prod_{i=1}^k S^{2n_i}$ with the product action of $\prod_{i=1}^k T^{n_i}$ admits a toric origami form if and only if $n_i=1$ except for one $i$.
\end{theorem}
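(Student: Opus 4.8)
For the easy implication I would simply exhibit the form. If $n_i=1$ for every index except possibly one, say $n_1=m$, then $\prod_{i=1}^k S^{2n_i}$ is equivariantly $S^{2m}\times(S^2)^{k-1}=S^{2m}\times(\C P^1)^{k-1}$. By Example~\ref{exam:3.1} the sphere $S^{2m}$ is a toric origami manifold, while $(\C P^1)^{k-1}$ is a symplectic toric manifold, so the product carries the product toric origami form as recalled at the beginning of Section~\ref{sec:6}. This disposes of the ``if'' direction with no further work.

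The substance is the ``only if'' direction, which I would prove in contrapositive form: if at least two of the $n_i$ are $\ge 2$, then $M:=\prod_i S^{2n_i}$ with the product action admits no toric origami form. The plan is to generalize Lemma~\ref{lemm:6.1}. Assuming such a form exists, with template $(\cP,\cF)$, I would first use that $M$ is orientable (hence co\"orientable) and simply connected, so that Proposition~\ref{prop:3.1} forces $(\cP,\cF)$ to be acyclic; thus $\Gamma(\cP,\cF)$ is a tree and $|(\cP,\cF)|\cong M/T$ is contractible. Next I would record three invariants read directly off the product action: $M/T$ has exactly $2^k$ vertices (the fixed points $\prod_i\{\pm\}$); its facets are the characteristic submanifolds, numbering $\sum_i c(n_i)$ with $c(1)=2$ and $c(n)=n$ for $n\ge 2$ (since $\{z_j=0\}\subset S^2$ is two points but is connected in $S^{2n}$ for $n\ge 2$); and, for each $i$ with $n_i\ge 2$, the intersection $\bigcap_{j=1}^{n_i}\Phi_{(i,j)}$ of the $n_i$ facets from the $i$-th factor equals two disjoint copies of the orbit space $\prod_{l\ne i}(S^{2n_l}/T^{n_l})$, hence has exactly two connected components.

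I would then analyze the tree from its leaves inward, exactly as in Lemma~\ref{lemm:6.1}. A leaf polytope $P$ has a single folded facet, and because $\Gamma(\cP,\cF)$ is a tree each facet of $|(\cP,\cF)|$ meets $P$ in at most one facet (a second such facet would force a cycle); comparing the $\ge N+1$ facets of the Delzant $N$-polytope $P$, where $N=\sum_i n_i$, with the $\sum_i c(n_i)$ facets of $|(\cP,\cF)|$ bounds the facet number of $P$. When no $n_i$ equals $1$ we have $\sum_i c(n_i)=N$, which forces every leaf to be an $N$-simplex carrying all $N$ facets; the polytope across its fold is then forced, by the argument of Lemma~\ref{lemm:6.1}, to be a prism $\Delta^{N-1}\times\Delta^1$ with two folded ends and no free vertex, and propagating this makes $\Gamma(\cP,\cF)$ a path with only two leaves, hence only two fixed points. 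This contradicts $2^k\ge 4$; equivalently, the two-component computation shows the common intersection $\bigcap_{i,j}\Phi_{(i,j)}$ of all facets would have to be both the $2^k$ fixed points and, along a path, only $2$ points.

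The main obstacle I anticipate is pushing this through when some $n_i=1$, for then $\sum_i c(n_i)>N$, leaves need not be simplices but prisms of the form $\Delta^{N-a-1}\times(\Delta^1)^{a}$ carrying up to $2^{a}$ free vertices each, and the tree can branch. The plan here is to peel off the $S^2$-factors: since $(S^2)^a=(\C P^1)^a$ is symplectic toric with cubical orbit space, I would argue that the $2a$ facets it contributes are never folded and exhibit $|(\cP,\cF)|$ as built over a $(\Delta^1)^a$ factor, reducing to the genuinely folded part $S^{2m_1}\times S^{2m_2}$ handled above. Making this reduction rigorous---controlling how the folds of the two big factors interact across the cube directions, and excluding tree shapes that would fabricate the required $2^k$ vertices while keeping only $\sum_i c(n_i)$ facets---is the crux, and is precisely where having two factors of dimension $\ge 4$ rather than one obstructs a convex Delzant decomposition of $M/T$.
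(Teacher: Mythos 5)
Your ``if'' direction is fine and coincides with the paper's. The ``only if'' direction, however, has a genuine gap, and it is one you yourself flag: the case in which some of the factors are $S^2$ (so that $\sum_i c(n_i)>N$, leaves need not be simplices, and the tree can branch) is left as ``the crux'' without a complete argument. The missing idea is the reduction the paper uses: any characteristic submanifold of a toric origami manifold is again a toric origami manifold with the restricted form, and hence so is any connected component of an intersection of characteristic submanifolds. For the product $\prod_{i=1}^k S^{2n_i}$ these components include (a connected component of) $\prod_{i=1}^k S^{2m_i}$ with the standard action of $\prod_i T^{m_i}$ for any $0\le m_i\le n_i$; if two exponents satisfy $n_{i_1},n_{i_2}\ge 2$, take $m_{i_1}=m_{i_2}=2$ and $m_i=0$ otherwise to land exactly on $S^4\times S^4$, which Lemma~\ref{lemm:6.1} rules out. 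This turns the whole ``only if'' direction into a two-line corollary of Lemma~\ref{lemm:6.1} and completely sidesteps the mixed case that defeats your direct template analysis.

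Your attempt to rerun the Lemma~\ref{lemm:6.1} leaf-by-leaf argument on the full product is not wrong in spirit when every $n_i\ge 2$ (there the facet count $\sum_i c(n_i)=N$ does force leaves to be $N$-simplices and the tree to be a path with two leaves, contradicting $2^k\ge 4$ fixed points), but even there you would need to justify carefully that distinct facets of a leaf polytope cannot lie in the same facet of $|(\cP,\cF)|$, and in any case the theorem as stated requires the mixed case. As written, the proposal does not prove the statement; you should either supply the peeling-off argument for the $S^2$-factors in full, or, better, replace the global template analysis by the restriction-to-characteristic-submanifolds reduction.
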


\begin{proof}
Any characteristic submanifold of a toric origami manifold is again a toric origami manifold with the restricted form.  Therefore, any connected component of intersections of the characteristic submanifolds is a toric origami manifold.  If $\prod_{i=1}^k S^{2n_i}$ with the product action of $\prod_{i=1}^k T^{n_i}$ admits a toric origami form, then so is any connected component of $\prod_{i=1}^k S^{2m_i}$ with the standard action of $\prod_{i=1}^k T^{m_i}$ for any $0\le m_i\le n_i$.  This observation together with Lemma~\ref{lemm:6.1} proves the \lq\lq only if part" of the theorem.

Any $S^{2n}$ with the standard $T^n$-action admits a toric origami form (see Example~\ref{exam:3.1}) and $S^2$ with the standard $S^1$-action admits an invariant symplectic form since $S^2=\C P^1$.  The product of these forms is a toric origami form, so the \lq\lq if part" of the theorem follows.
\end{proof}

Here is another example which provides an affirmative evidence to the problem above.

    \begin{proposition}
        A manifold $S^{2n}\times T^2$ $(n\ge 2)$ with a standard product torus action does not admit a toric origami form.
    \end{proposition}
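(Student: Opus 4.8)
The plan is to mimic the combinatorial analysis of Lemma~\ref{lemm:6.1}, but along a cycle rather than a tree. Here the acting torus is $T^{n+1}=T^n\times S^1$, with $T^n$ acting standardly on $S^{2n}$ and $S^1$ acting freely on $T^2$, so that $\dim T=n+1=\frac12\dim M$ and $M=S^{2n}\times T^2$ is orientable (as in Example~\ref{exam:4.2} with $n=1$). Since $S^1$ acts freely on the $T^2$-factor, $M$ has no $T$-fixed point, and the orbit space is $M/T=(S^{2n}/T^n)\times(T^2/S^1)=D_n\times S^1$, where $D_n:=S^{2n}/T^n$ is a contractible manifold with corners whose only vertices are the images of the two poles of $S^{2n}$ and which has $n$ facets for $n\ge2$. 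In particular $D_n\times S^1$ has no vertex, has exactly $n$ facets, and the intersection of all $n$ facets is the pair of circles $(\bigcap_{i=1}^n M_i)/T$.

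Now suppose $M$ admits a toric origami form and let $(\cP,\cF)$ be the associated origami template, which is oriented because $M$ is orientable; thus $\cF$ consists only of facet pairs. I would first record that, since $M$ has no fixed point, $|(\cP,\cF)|\cong M/T$ has no vertex, so every vertex of every polytope $P\in\cP$ must lie on a folding facet, and by (O2) on exactly one. A single facet cannot contain all vertices of $P$, so each $P$ carries at least two folding facets; on the other hand the total number of folding facets is $2|\cF|=2E$, where $E$ is the number of edges of $\Gamma(\cP,\cF)$. As $\pi_1(M)\cong\Z^2$ is nontrivial, $\Gamma(\cP,\cF)$ is not a tree by Proposition~\ref{prop:3.1}, and since $M/T\simeq S^1$ its first Betti number is $1$, whence $E=V$ with $V=\#\cP$. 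Comparing $2E=2V$ with the bound ``at least two per polytope'' forces every $P\in\cP$ to have exactly two folding facets; by (O2) these two facets share no vertex, hence are disjoint, and together they contain all vertices of $P$.

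As in the propagation step of Lemma~\ref{lemm:6.1}, a simple polytope two of whose disjoint facets contain all vertices is combinatorially a prism $Q\times[0,1]$ with the two bases as the distinguished facets; by (O1) the base $Q$ is the same Delzant polytope for all $P\in\cP$. Gluing these prisms along their folding bases realizes $|(\cP,\cF)|$ as a bundle over $S^1$ with fiber the $n$-dimensional polytope $Q$, and since this space is $D_n\times S^1$ its fiber is $D_n$; thus $Q\cong D_n$ as manifolds with corners. This is the desired contradiction: $Q$ is a genuine $n$-dimensional Delzant polytope and so has at least $n+1\ge3$ vertices, whereas $D_n$ has only two. For $n=1$ the argument breaks down precisely because $D_1$ is an interval, which \emph{is} a polytope, consistent with Example~\ref{exam:4.2}.

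The hard part will be the last identification of the fiber: traversing the cyclic graph $\Gamma(\cP,\cF)$ once, the gluings may a priori compose to a nontrivial automorphism of $Q$, so that $|(\cP,\cF)|$ is only a twisted $Q$-bundle and one cannot immediately read off $Q\cong D_n$. I would settle this by counting bundle invariants: the monodromy must preserve the labeling of the facets of $Q$ by the $n$ characteristic classes (because each facet of $D_n\times S^1$ is connected), so its number of facet-orbits is $n$, while the number of orbits of those vertices of $Q$ lying on all $n$ classes must equal the number of minimal faces of $D_n\times S^1$, namely the two circles $(\bigcap_{i=1}^n M_i)/T$. These two counts rigidify the pair (base $Q$, monodromy) enough to pin the combinatorial type of $Q$ to that of $D_n$, which is impossible for an honest polytope once $n\ge2$.
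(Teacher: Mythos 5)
Your combinatorial reduction is sound up to the last step, and it follows a genuinely different route from the paper's. The paper restricts to $n=2$ (by the hereditary argument used for Theorem~\ref{theo:6.1}), cuts the cycle $\Gamma(\cP,\cF)$ at two edges into two acyclic sub-templates, identifies each as $F_1\times\Delta^1$ so that the corresponding toric origami manifolds are $M_{F_1}\times S^2$ for the symplectic toric $4$-manifold $M_{F_1}$ over the Delzant polygon $F_1$, and concludes with the homological contradiction $H_2(S^4\times T^2)\not\cong H_2(M_{F_1}\times T^2)$. You instead keep general $n$, and your chain of deductions --- absence of fixed points forces every vertex onto exactly one folding facet by (O2); the count $2E=2V$ coming from $b_1(\Gamma(\cP,\cF))=b_1(M/T)=1$ forces exactly two folding facets per polytope; a simple polytope with two disjoint facets containing all its vertices is a combinatorial prism (dually, a simplicial sphere covered by the stars of two non-adjacent vertices is the suspension of their common link) --- is correct and stays entirely at the level of the orbit space.

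The gap is exactly where you flag it, and your proposed repair does not close it. Counting monodromy orbits of facets ($n$ of them) and of vertices ($2$ of them) does not pin $Q$ down to $D_n$: already for $n=2$ the square with a half-turn monodromy has two facet-orbits and two vertex-orbits, so these invariants cannot distinguish its mapping torus from $D_2\times S^1$; one would need finer data, e.g.\ how many times the vertex-circles wrap the base. Fortunately your setup admits a short finish that bypasses the identification $Q\cong D_n$ entirely. By (O1) consecutive prisms literally agree near their common folding facet, so a non-folded facet $H_j$ of $P_j$ and the non-folded facet $H_{j+1}$ of $P_{j+1}$ meeting it in a codimension-two face span the same hyperplane with the same outward primitive normal; following a facet $G$ of $Q$ once around the cycle therefore returns a facet of $Q$ with the same outward normal, which must be $G$ itself because distinct facets of a convex polytope have distinct outward normal vectors. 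Hence the monodromy fixes every facet of $Q$, the corank-one stratum of $|(\cP,\cF)|$ has exactly as many connected components as $Q$ has facets, and so $|(\cP,\cF)|$ has at least $n+1$ facets, $Q$ being an honest $n$-dimensional polytope. This contradicts your (correct) count of $n$ facets for $D_n\times S^1$ when $n\ge 2$, and is consistent with Example~\ref{exam:4.2} when $n=1$ since an interval has two facets. With this replacement for the final step, your proof is complete.
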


    \begin{proof}
        It suffices to prove the lemma when $n=2$ by the same reason in the proof of Theorem~\ref{theo:6.1}.
       Suppose that $S^{4}\times T^2$ with the product action of $T^3=T^2\times T^1$ admits a toric origami form and let $(\cP,\cF)$ be the associated origami template. Since $(S^{4}\times T^2)/T^{3}$ is homeomorphic to a solid torus $D^2\times S^1$, we can find two sub-origami templates $(\cP_1,\cF_1)$ and $(\cP_2,\cF_2)$ such that
        \begin{enumerate}
            \item $(\cP_1,\cF_1)$ and $(\cP_2,\cF_2)$ are acyclic;
            \item $\cP=\cP_1\cup \cP_2$;
            \item $\cF=\cF_1\cup\cF_2\cup\{(F_1,G_1),(F_2,G_2)\}$, where $F_i$ is a facet of a polytope in $\cP_1$ and $G_i$ is a facet of a polytope in $\cP_2$ for $i=1,2$.
        \end{enumerate}
        Since $|(\cP,\cF)|$ has no vertex, $|(\cP_1,\cF_1)|$ and $|(\cP_2,\cF_2)|$ are homeomorphic to $F_1\times\Delta^1$ as manifolds with corners. Since each $F_i$ and each $G_i$ are facets of some Delzant polytopes, they are also Delzant polytopes. Let $M_{F_1}$ be the symplectic toric manifold corresponding to $F_1$. Then the toric origami manifolds corresponding to $(\cP_1,\cF_1)$ and $(\cP_2,\cF_2)$ are equivariantly diffeomorphic to $M_{F_1}\times S^2$. Therefore, the toric origami manifold corresponding to $(\cP,\cF)$ is $M_{F_1}\times T^2$. This is a contradiction because $H_2(S^{4}\times T^2)\not\cong H_2(M_{F_1}\times T^2)$.
    \end{proof}

We conclude this paper with two more observations on the problem mentioned in this section.

\begin{lemma}
Let $M$ and $M'$ be co\"orientable toric origami manifolds. If $M\times M'$ with the product action admits a toric origami form, then either $M$ or $M'$ is simply connected.
\end{lemma}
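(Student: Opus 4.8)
The plan is to reduce the statement to an elementary fact about free groups, using only the structural description of orbit spaces of toric origami manifolds from Section~\ref{sec:3} together with the ``moreover'' clause of Proposition~\ref{prop:3.1}. Write $T$ and $T'$ for the acting tori of $M$ and $M'$.

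First I would note that, since $M\times M'$ carries a toric origami form for the product $T\times T'$-action, it is itself a toric origami manifold; hence, as recalled after Example~\ref{exam:3.1}, its orbit space is homeomorphic to $|(\cP,\cF)|$ for the associated template and is therefore homotopy equivalent to the connected graph $\Gamma(\cP,\cF)$, i.e.\ to a bouquet of $S^1$. Thus $\pi_1\bigl((M\times M')/(T\times T')\bigr)$ is free. On the other hand, in the product action $T$ moves only the first factor and $T'$ only the second, so the orbit space splits,
\[
(M\times M')/(T\times T')=(M/T)\times(M'/T'),
\]
whence $\pi_1(M/T)\times\pi_1(M'/T')$ is free. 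Since $M$ and $M'$ are toric origami manifolds in their own right, the same reasoning shows that $\pi_1(M/T)$ and $\pi_1(M'/T')$ are each free as well.

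The heart of the argument is then the observation that a direct product $F\times F'$ of two free groups can be free only if one factor is trivial: if both were non-trivial I could pick $1\neq a\in F$ and $1\neq a'\in F'$, which, being of infinite order in torsion-free groups, make $(a,1)$ and $(1,a')$ generate a copy of $\Z\times\Z$; but $\Z^2$ is not free and every subgroup of a free group is free by Nielsen--Schreier, a contradiction. Applying this to $F=\pi_1(M/T)$ and $F'=\pi_1(M'/T')$, I may assume $\pi_1(M/T)=1$. A bouquet of $S^1$ with trivial fundamental group is contractible, so $M/T$ is contractible and the origami template of $M$ is acyclic.

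Finally I would invoke coorientability: by the ``moreover'' part of Proposition~\ref{prop:3.1}, a coorientable toric origami manifold is simply connected precisely when its template is acyclic. As $M$ is coorientable with acyclic template, $M$ is simply connected, as required. I do not anticipate a genuine obstacle; the two places asking for care are the splitting of the product orbit space and the verification that the freeness step rests only on Nielsen--Schreier, so that the coorientability hypothesis is used solely at the last step to pass from $M/T$ back to $M$ itself.
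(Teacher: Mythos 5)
Your proof is correct and follows essentially the same route as the paper's: both split the orbit space of the product as $(M/T)\times(M'/T')$ and derive a contradiction from the fact that the orbit space of a toric origami manifold must be homotopy equivalent to a bouquet of $S^1$. The only difference is the invariant used to rule out a product of two non-trivial bouquets --- the paper notes it has non-trivial $H_2$ (K\"unneth), while you note its fundamental group, a product of two non-trivial free groups, contains $\Z\times\Z$ and so cannot be free; both work equally well.
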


\begin{proof}
Suppose that both $M$ and $M'$ are not simply connected.  Then their orbit spaces $M/T$ and $M'/T'$ are also not simply connected by Proposition~\ref{prop:3.1}.  Since $M$ and $M'$ are toric origami manifolds, both $M/T$ and $M'/T'$ are homotopy equivalent to (non-trivial) bouquets of $S^1$.  Therefore the product $M/T\times M'/T'$ has a non-trivial homology in degree $2$ and hence $M\times M'$ with the product action does not admit a toric origami form.
\end{proof}

Remember that a facet in an origami template $(\cP,\cF)$ is called non-folded if it does not touch any facet in $\cF$.

\begin{lemma} \label{lemm:6.3}
Let $M$ and $M'$ be co\"orientable toric origami manifolds. If $M\times M'$ with the product action admits a toric origami form whose associated origami template has a non-folded facet, then either $M$ or $M'$ admits a toric symplectic form which may be different from the original toric origami form on $M$ or $M'$.
\end{lemma}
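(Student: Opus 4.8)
The plan is to produce, inside $M\times M'$, a characteristic submanifold that is honestly symplectic and that contains a symplectically embedded copy of $M$ or of $M'$; the non-folded facet is precisely what forces such a submanifold to exist, and a fixed-point slice then peels off the symplectic factor.

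First I would identify the characteristic submanifolds of $M\times M'$. The orbit space $(M\times M')/(T\times T')$ is homeomorphic, as a manifold with corners, to $|(\cP,\cF)|$ (as noted after Example~\ref{exam:3.1}), and its characteristic submanifolds correspond to the facets of this space. For the product action a circle subgroup of $T\times T'$ has fixed point set of the form $M^{H}\times (M')^{H'}$, whose codimension is the sum of the two codimensions; since these codimensions are even, the total is two only when one factor is the whole manifold. Thus every characteristic submanifold of $M\times M'$ is either $M_i\times M'$ with $M_i$ a characteristic submanifold of $M$, or $M\times M'_j$ with $M'_j$ one of $M'$.

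Next I would feed in the non-folded facet $F$ and let $\tilde F$ be the corresponding characteristic submanifold, so that $\tilde F=q^{-1}(F)$ for the orbit map $q$. Under $q$ the fold $Z$ maps onto the creases of $|(\cP,\cF)|$, i.e. onto the folding facets in $\cF$. Because $F$ does not touch any facet in $\cF$, the closed facet $F$ is disjoint from these creases, whence $\tilde F\cap Z=q^{-1}(F)\cap Z=\emptyset$. Since a characteristic submanifold inherits a toric origami structure by restriction (as used in the proof of Theorem~\ref{theo:6.1}) and its fold is $Z\cap\tilde F$, the restricted form on $\tilde F$ has empty fold; that is, $\tilde F$ is a symplectic toric manifold. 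After interchanging the roles of $M$ and $M'$ if necessary, I may assume $\tilde F=M_{i_0}\times M'$.

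Finally I would split off the symplectic factor. As $\tilde F$ is symplectic toric it has $(T/S^1_{i_0})\times T'$-fixed points, so $M_{i_0}$ has a fixed point $p$ for the $T/S^1_{i_0}$-action. The factor $T/S^1_{i_0}$ acts on $\tilde F$ only through $M_{i_0}$, so $\{p\}\times M'$ is a component of the fixed point set of this subtorus and hence a symplectic submanifold carrying the restricted Hamiltonian $T'$-action; it is $T'$-equivariantly $M'$. The restricted symplectic form therefore makes $M'$ a symplectic toric manifold, which is the desired conclusion (symmetrically, $\tilde F=M\times M'_{j_0}$ endows $M$ with a toric symplectic form). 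I expect the main obstacle to be the third step: verifying that the purely combinatorial non-folded hypothesis really forces the restricted origami form on $\tilde F$ to be genuinely symplectic, together with the careful identification of $\{p\}\times M'$ with the \emph{full} factor $M'$ rather than with a proper characteristic submanifold of it.
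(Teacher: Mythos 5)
Your proposal is correct and follows essentially the same route as the paper's proof: identify the codimension-two submanifold $\tilde F$ over the non-folded facet as $M_{i_0}\times M'$ (or $M\times M'_{j_0}$), observe that it misses the fold and is therefore symplectic toric, and then restrict to a fixed-point slice $\{p\}\times M'$ to endow $M'$ with a toric symplectic form. The only differences are cosmetic — you justify the splitting of the circle subgroup into $T$ or $T'$ by codimension parity where the paper invokes local standardness, and you make explicit the (implicit in the paper) reason why $\tilde F$ is genuinely symplectic.
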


\begin{proof}
Let $(\cP,\cF)$ be an origami template associated to $M\times M'$.  The topological space $|(\cP,\cF)|$ is a manifold with corners and a codimension-$1$ face of $|(\cP,\cF)|$ corresponds to a codimension-$2$ closed connected submanifold of $M\times M'$ fixed pointwise under some circle subgroup of $T\times T'$.  Such a circle subgroup is contained in either $T$ or $T'$ because the $T$-action on $M$ and $T'$-action on $M'$ are both locally standard.

Let $F$ be a non-folded facet in $(\cP,\cF)$ and $S$ be a circle subgroup which fixes a codimension-$2$ closed connected submanifold $Q$ of $M\times M'$ corresponding to $F$.  Then $Q$ admits a toric symplectic form and $Q$ is a connected component of the $S$-fixed point set $(M\times M')^S$.  Note that $S$ is contained in either $T$ or $T'$ and we may assume that $S$ is contained in $T$ without loss of generality. Then $(M\times M')^S=M^S\times M'$.
Since $Q$ is a connected component of $(M\times M')^S=M^S\times M'$, $Q$ is of the form $Q_1\times M'$ where $Q_1$ is a connected component of $M^S$.  Since $Q=Q_1\times M'$ admits a toric symplectic form,  so is any connected component of the $T$-fixed point set $Q^T=Q_1^T\times M'\subset M^T\times M'$.  Here, $Q_1^T(\subset M^T)$ consists of finitely many points, so any connected component of $Q^T$ can be identified with $M'$. Therefore $M'$ admits a toric symplectic form.
\end{proof}

The assumption in Lemma~\ref{lemm:6.3} seems satisfied in most cases. But the following observation shows that there is no topological condition on a toric origami manifold which ensures the existence of a non-folded facet in the associated origami template.  Let $M$ be a toric origami manifold of dimension $2n$ and $(\cP,\cF)$ be the associated origami template.  An equivariant connected sum of $M$ with the toric origami manifold $S^{2n}$ in Example~\ref{exam:3.1} (with an action of $T$ through an appropriate automorphism of $T$) produces a toric origami manifold $M'$ equivariantly diffeomorphic to $M$. But if $v$ is the vertex in $(\cP,\cF)$ corresponding to the fixed point in $M$ used to perform the equivariant connected sum and $F$ is a facet in $(\cP,\cF)$ containing $v$, then the corresponding facet $F'$ to $F$ in the origami template $(\cP',\cF')$ associated to $M'$ is not non-folded.  Therefore, performing this equivariant connected sum operation at all fixed points in $M$, one can produce a toric origami manifold which is equivariantly diffeomorphic to $M$ but whose associated origami template has no non-folded facet.

\bigskip

\end{document}